\documentclass[11pt,reqno,final]{article}

\usepackage{amsmath,amsfonts,amssymb,amsthm,version}
\usepackage{mathrsfs,fancybox,pifont}
\usepackage{graphicx}
\usepackage{algorithm,algorithmic}
\usepackage{cite}
\usepackage{setspace}
\usepackage{url,hyperref}
\usepackage{fullpage} 
\usepackage[notcite,notref]{showkeys}
\usepackage{xcolor}
\usepackage{subfigure,multirow}
\usepackage{epstopdf}
\usepackage{cases}
\usepackage{geometry}
\usepackage{mathtools}
\usepackage{authblk}
\usepackage{lipsum}
\usepackage{indentfirst}

\newcommand{\dd}{\operatorname{d}\! }
\newcommand{\dt}{\operatorname{d}\! t}
\newcommand{\ds}{\operatorname{d}\! s}
\newcommand{\dr}{\operatorname{d}\! r}

\newcommand{\dw}{\operatorname{d}\! W}

\newcommand{\R}{\mathbb{R}}
\newcommand{\nn}{\nonumber}

\DeclareMathOperator*{\essinf}{ess\,inf}

\renewcommand{\geq}{\geqslant}
\renewcommand{\leq}{\leqslant}
\renewcommand{\ge}{\geqslant}

\allowdisplaybreaks

\numberwithin{equation}{section}

\theoremstyle{plain}
\newtheorem{theorem}{Theorem}
\newtheorem{proposition}{Proposition}
\newtheorem{lemma}{Lemma}
\newtheorem{assumption}{Assumption}
\newtheorem{remark}{Remark}

\theoremstyle{definition}

\newtheorem{prob}{Problem}

\title{Stochastic LQ Optimal Control with Random Coefficients and a Terminal Mean-Field Cost\thanks{This paper is supported by National Key R\&D Program of China (No.2022YFA1006101), National Natural Science Foundation of China (Nos.12371445, 12571517), Hong Kong RGC (GRF 15203423, 15204622), PolyU-SDU Joint Research Center on Financial Mathematics, CAS AMSS-PolyU Joint Laboratory of Applied Mathematics, Research Centre for Quantitative Finance (1-CE03), internal grants from The Hong Kong Polytechnic University, and State Key Laboratory of Cryptography and Digital Economy Security, Shandong University (No.KFZD2505).}}

\author{Guojiang Shao\thanks{School of Mathematical Sciences, Fudan University, Shanghai 200433, China. Email: \url{gjshao23@m.fudan.edu.cn}},~~ Zuo Quan Xu\thanks{Department of Applied Mathematics, The Hong Kong Polytechnic University, Kowloon, Hong Kong SAR, China. Email: \url{maxu@polyu.edu.hk}}, ~~and Qi Zhang\thanks{Corresponding author. School of Mathematical Sciences, Fudan University, Shanghai 200433, China; State Key Laboratory of Cryptography and Digital Economy Security, Shandong University, Jinan 250100, China. Email: \url{qzh@fudan.edu.cn}}}

\begin{document}
\maketitle
\begin{abstract}
This paper investigates a multidimensional non-homogeneous stochastic linear-quadratic optimal control problem featuring random coefficients and a terminal mean-field term in the cost functional, enabling its direct application to mean-variance models in financial engineering. Employing the Lagrangian duality method together with a decomposition approach for linear backward stochastic differential equations, we provide two types of sufficient conditions for solvability and derive the corresponding optimal controls. In particular, in the deterministic-coefficient case, our condition is weaker than the standard condition found in the existing literature on mean-field stochastic LQ problems. Finally, a numerical example drawn from optimal portfolio selection with multiple assets under mean-variance utility demonstrates the applicability of our results.
\end{abstract}

\section{Introduction}\label{sec:introduction}

Let $(\Omega, \mathcal{F}, \mathbb{P})$ be a complete probability space, and $\{W_t\}_{t\geq0}$ be a one-dimensional standard Brownian motion in it with its natural filtration augmented by all $\mathbb{P}$-null sets, and set $\mathbb{F}=\{\mathcal{F}_t\}_{t \geq 0}$. The state dynamics we concern in this paper is the following $ \R^n$-valued controlled linear stochastic differential equation (SDE)
\begin{equation} \label{staeq}
\dd X(s) = \{A(s) X(s) + B(s) u(s) + a(s) \} \ds + \{C(s) X(s) + D(s) u(s) + b(s)\} \dw(s), ~~ s \in[0, T], 
\end{equation}
and the quadratic cost functional given by
\begin{align} \label{functional}
J(x ; u) :=&~ \mathbb{E}^{0,x,u} \bigg\{\int_0^T \left[ \langle Q(s) (X(s)-q(s)), X(s)-q(s) \rangle +\langle R(s)( u(s)-p(s)), u(s)-p(s) \rangle \right] \ds \nn\\
& + \langle G (X(T)-\xi),X(T)-\xi\rangle + \langle\bar{G} (\mathbb{E}^{0,x,u}[X(T)]-\eta), \mathbb{E}^{0,x,u}[X(T)]-\eta \rangle\bigg\},
\end{align}
%
%
where the control process $u$ and the coefficients 
satisfy suitable conditions which will be specified in next section. Here $\mathbb{E}^{t,x,u}$ denotes the expectation with respect to the law of the state process $\{X_s\}_{s\in[t,T]}$ governed by \eqref{staeq} with initial state $X(t)=x$ and control $u$.

Define the admissible control set
\begin{equation*}
\mathcal{U}[t, T]:= \left\{u:[t, T] \times \Omega \rightarrow \R^m, ~ \mathbb{F} \text {-progressively measurable and}\ \mathbb{E} \left[ \int_t^T\|u(s)\|^2 \ds \right] <\infty\right\}.
\end{equation*}
The stochastic linear-quadratic (SLQ) optimal control problem with random coefficients and a terminal mean-field cost is formulated as follows.
\begin{prob} \label{MFLQ}
Given $x \in \R^n$, find a $\bar{u} \in \mathcal{U}[0, T]$ such that
\begin{equation*}
J\left(x ; \bar{u}\right)=\inf _{u \in \mathcal{U}[0, T]} J(x ; u).
\end{equation*}
\end{prob}
If it exists, \(\bar{u}\) is called an \textbf{optimal control}; the corresponding state process \(\bar{X}\) governed by \eqref{staeq} called the \textbf{optimal state process}; and \((\bar{X}, \bar{u})\) called the \textbf{optimal pair}. \textbf{Problem \ref{MFLQ}} is said to be \textbf{solvable} if it admits an optimal pair.

{\bf Problem \ref{MFLQ}} has a wide range of applications. Here is an example. Consider the problem of optimal portfolio selection under the extended mean-variance (MV) criterion in a non-Markovian financial market consisting of $n$ risky assets (stocks). The price process of asset $i$, $S_i=\{S_i(t)\}_{t\in[0,T]}$, is governed by
\begin{equation}\label{assets}
	\frac{ \dd S_i(s)}{S_i(s)} = \mu_i(s)\ds + \sigma_i(s)\dw(s), \quad S_i(0) = s_i>0, \quad s \in[0, T],
\end{equation}
where $\mu_i$ is the appreciation rate process of asset $i$ and $\sigma_i$ represents the volatility process with suitable conditions, $i=1,2,\cdots,n$.
Suppose an investor manages the $n$ assets in $n$ different accounts, and the corresponding wealth process is a vector process $X = \{(X_1(t),X_2(t) \cdots, X_n(t))^\top \}_{t\in[0,T]}\in \R^n$. If transaction costs and consumption are ignored and share trading takes place in continuous time, then the wealth process for asset $i$, $\{X_i(t)\}_{t\in[0,T]}$, satisfies
\begin{equation*}
	\left\{\begin{aligned}
		 \dd X_i(s) &= \mu_i(s)\pi_i(s)\ds + \sigma_i(s)\pi_i(s) \dw(s), \quad s \in[0, T],
		\\ X_i(0)&=x_i\in \R,
	\end{aligned}\right.
\end{equation*}
where $\pi = \{(\pi_1(s), \cdots, \pi_n(s))^\top \}_{s\in[0,T]}\in \R^n$ is the amount invested in the corresponding risky asset of each account, $i=1,2,\cdots,n$.
The investor's preference can be formulated through the utility functional
\begin{equation}\label{preference}
	J(x;\pi) := \mathbb{E}^{0,x,\pi} \langle \upsilon, X(T) \rangle - \mathbb{E}^{0,x,\pi} \langle \Sigma \left( X(T)-\mathbb{E}^{0,x,\pi}[X(T)] \right), X(T)-\mathbb{E}^{0,x,\pi}[X(T)] \rangle.
\end{equation}
Here the vector $v \in \mathbb{R}^n$ represents the investor's preference toward expected returns across different accounts. Each component $v_i$, $i=1,2,\ldots,n$, measures the relative importance assigned to the expected terminal wealth of asset $i$. The positive definite matrix $\Sigma \in \mathbb{R}^{n \times n}$ characterizes the investor's risk attitude in a multi-dimensional manner. The diagonal elements reflect the degree of risk aversion toward individual assets, while the off-diagonal elements capture the investor's sensitivity to joint fluctuations between different assets, thereby encoding preferences toward diversification or concentration. Therefore, compared with classical MV models featuring a scalar risk aversion parameter, the above extended MV model, indexed by $(v, \Sigma)$, provides a flexible framework to describe heterogeneous preferences and cross-asset risk interactions.

Since the pioneering work of Wonham \cite{wonham1968matrix} in 1968, the LQ optimal control problem has been one of the most important topics in control theory, owing to its special structure and wide range of applications. Bismut \cite{bismut1976linear} in 1976 was the first to investigate the SLQ problem with random coefficients. Kohlmann and Zhou \cite{kohlmann2000relationship} established a fundamental connection between SLQ problems and backward stochastic differential equations (BSDEs) in 2000. It is well known that the stochastic Riccati equation (SRE) plays a crucial role in deriving a closed-form representation of the optimal control in SLQ problems. In the classical setting with stochastic coefficients, it is actually a BSDE of the form
\begin{equation} \label{bsde1}
\left\{\begin{aligned}
& \dd P = - g_1(P,\Lambda) \ds + \Lambda \dw, \quad s \in[0, T], \\
& P(T)= G,
\end{aligned}\right.
\end{equation}
where
\begin{equation*}
g_1(P,\Lambda) = -\left(\Theta(P,\Lambda) \right)^{\top}\left(D^{\top} P D + R\right)\Theta(P,\Lambda) + PA + A^{\top}P + \Lambda C + C^{\top}\Lambda + C^{\top} P C + Q,
\end{equation*}
and $\Theta(P,\Lambda)$ is defined as
\begin{equation}\label{thetafun}
\Theta(P,\Lambda) := -\left(D^{\top} P D + R\right)^{-1} \left( B^{\top} P +D^{\top}\Lambda + D^{\top} P C \right).
\end{equation}
As is well-known, its solution consists of a pair $(P,\Lambda)$, which is called uniformly positive if so is $P$.
Note the SRE \eqref{bsde1} will reduce to an ordinary differential equation (ODE) when the coefficients are all deterministic, even if the LQ problem is formulated in a stochastic environment.

The solvability of SRE had been regarded as ``a challenging task" for a long time due to its highly nonlinear generator. Kohlmann and Tang \cite{kohlmann2002global} first established the existence and uniqueness of one-dimensional SREs in 2002. Soon after, Tang \cite{tang2003general} made a breakthrough in 2003 by proving the existence and uniqueness of matrix-valued SREs under the assumption of a uniformly positive control weighting matrix (referred to as the standard case in the literature). Subsequently, Tang \cite{tang2015dynamic} provided an alternative proof via the dynamic programming principle in 2015.
To some extent, the uniformly positive condition can be relaxed. Considerable work has been done on SLQ problems with an indefinite control weighting matrix (referred to as the singular case), including contributions by Qian and Zhou \cite{qian2013existence}, Du \cite{du2015solvability}, and Sun, Xiong and Yong \cite{sun2021indefinite}.

As for the uniformly positive definiteness of the solution component $P$ to the SRE in \eqref{bsde1}, for which the positive definiteness of $G$ is necessary, the well-posedness and uniform positive definiteness of the solution to the multi-dimensional SRE were proved by Kohlmann and Tang \cite{kohlmann2003minimization} in 2003 under the condition that $D^\top D$ is uniformly positive definite, based on a priori estimates and the results in \cite{tang2003general}. However, under the condition that $R$ is uniformly positive definite, only the uniform positive definiteness of the solution to the one-dimensional SRE was recently obtained by Hu, Shi, and Xu \cite{MR4729330} with the help of the comparison theorem for quadratic BSDEs. To the best of our knowledge, no existing results on the uniform positive definiteness of the solution are available for multi-dimensional SREs when only the uniform positive definiteness of $R$ is assumed, due to the absence of a comparison theorem in the multi-dimensional case. One motivation of this paper is to prove this.

On the other hand, mean-field systems have become highly popular research topics in recent years, and mean-field control systems are no exception. Mean-field control problems concern the centralized optimization of large-population systems whose dynamics are governed by McKean-Vlasov-type SDEs, focusing on designing a social planner that minimizes a global cost functional depending on both individual trajectories and their distributions. Among mean-field control problems, the mean-field LQ optimal control problem is one of the most extensively studied due to its analytical tractability and wide applicability. However, the mean-field term causes significant difficulty for the classical Riccati equation method. Yong \cite{yong2013linear} in 2013 managed to solve it by introducing two Riccati equations --- one related to the expectation of the state and the other to the deviation of the state --- to provide an explicit optimal solution. Subsequently, more work on the mean-field LQ problem appeared. For example, Yong \cite{yo2} investigated the time-consistent properties of the optimal control in 2017; Sun \cite{su} studied the indefinite case and open-loop solvability of such systems in the same year; and Tian and Yu \cite{ti-yu} generalized the state equation to a forward-backward stochastic differential equation in 2023, to name but a few.

It should be noted, however, that in most papers on mean-field control problems, the coefficients of the control system are required to be deterministic, since the method in \cite{yong2013linear} does not work for random coefficients. Consequently, existing results on mean-field control problems with random coefficients are very scarce. While Pham \cite{pham2016linear}, Mei, Wei and Yong \cite{mei2023linear}, and Ding et al. \cite{DLLX25} considered mean-field control problems with random coefficients, the settings in their control systems are quite restrictive. Xiong and Xu \cite{xiong2024mean} advanced this direction by studying a relatively general form of mean-field control system with random coefficients, but still without a terminal mean-field term --- i.e., the expectation of the state at the terminal time. As a result, their results cannot be applied to the MV portfolio selection problem directly. This limitation motivates us to study the mean-field control problem with a terminal mean-field term.

As mentioned above, SLQ with a terminal mean-field term is usually associated with the MV portfolio selection problem, one of the core problems in mathematical finance. The pioneering work of Markowitz \cite{markowitz1952modern} in 1952 introduced MV analysis, which optimizes asset allocation by balancing risk and return. Since then, the continuous-time MV portfolio optimization problem has been extensively studied under various market assumptions and methodological frameworks. We briefly recall some important developments on this topic.

For deterministic coefficients, Li and Zhou \cite{zhou2000continuous} investigated the continuous-time MV portfolio selection problem in 2000 using the embedding technique and the well-developed SLQ theory with deterministic coefficients. Under the constraint that short-selling of stocks is prohibited, Li, Zhou and Lim \cite{li2002dynamic} solved the continuous-time MV portfolio selection problem in 2002 by means of the Hamilton-Jacobi-Bellman (HJB) equation and two Riccati equations. As for the random coefficients case, Lim and Zhou \cite{lim2002mean} studied the continuous-time MV problem in a complete market in 2002 based on SLQ theory and BSDE theory. Hu, Shi and Xu \cite{hu2022constrained} dealt with a constrained SLQ problem and an MV portfolio selection problem with regime switching and random coefficients in 2022. However, all of the above MV references are based on one-dimensional SLQ theory, regardless of whether the coefficients are deterministic or random. Needless to say, solving the multi-dimensional MV portfolio selection problem with random coefficients --- which corresponds to a multi-dimensional SLQ problem with random coefficients and a terminal mean-field cost --- remains a challenge.

Recently, Zhang and Zhang \cite{zhang2023stochastic} in 2023 considered a multi-dimensional SLQ problem with an expectation-type linear equality constraint on the terminal state, derived equivalent characterizations of surjectivity conditions, and applied their theoretical results to portfolio management.
They extended the result to the case with a point-wise linear equality constraint on the terminal state in \cite{zhang2024lagrangian}. However, in both \cite{zhang2023stochastic} and \cite{zhang2024lagrangian}, the expectation of the state at the terminal time appears as a constraint rather than as part of the cost functional. This constitutes an essential difference between the problems they considered and \textbf{Problem \ref{MFLQ}} regarding the solvability of the SLQ problem.

The main contributions of this paper are as follows.
\textbf{First}, we establish the uniform positive definiteness of the solution $P$ to the SRE under the condition that $R$ is uniformly positive definite, using a priori estimates different from those in Kohlmann and Tang \cite{kohlmann2002global}, \cite{kohlmann2003minimization}.
\textbf{Second}, we apply the Lagrange multiplier method to study a new type of multi-dimensional SLQ control problem with random coefficients and a terminal mean-field cost, by fixing the expectation of the state at the terminal time. To characterize the value function, we utilize two types of BSDEs and derive two types of sufficient conditions on $\bar{G}$ to guarantee the solvability of \textbf{Problem \ref{MFLQ}}, where the second type condition comprises three specific conditions. From the third specific sufficient condition, we observe that in some special cases --- such as the deterministic coefficients case --- the sum $\bar{G} + G$ may be slightly negative. Consequently, the condition on $\bar{G}$ is partially relaxed compared with the existing literature, in which $\bar{G} + G$ is required to be positive semidefinite.
\textbf{Finally}, we construct a financial model to demonstrate a practical application of our theory and conduct a detailed analysis of the impact of model parameters on utility.

The remainder of this paper is organized as follows. Section \ref{pre} introduces some notations and assumptions and formulates several bridge problems. Section \ref{sec3} solves the bridge problems and \textbf{Problem \ref{MFLQ}}. Specifically, Subsection \ref{positiveSRE} establishes the uniform positive definiteness of the solution to the SRE \eqref{bsde1}, and Subsection \ref{SREsolution} solves \textbf{Problem \ref{MFLQ}} and presents two types of sufficient conditions on $\bar{G}$ for solvability. Section \ref{numerical} provides numerical results for the deterministic coefficients case. Section \ref{conclude} offers some concluding remarks.

\section{Preliminaries and Bridge problems}\label{pre}

\subsection{Notation and spaces}

In this section, we introduce some notations and assumptions. For $n\in \mathbb{N}$, denote by $\langle \cdot, \cdot \rangle$ the inner product in $ \R^n$, by $\mathbb{S}^n$ the set of all $n \times n$ symmetric matrices, and by $\mathbb{S}_+^n$ the set of all $n \times n$ positive semidefinite matrices. 
We say a matrix process $M\in\mathbb{S}^n$ is uniformly positive, written as $M > 0$, if $M\geq \delta I_n$ for some constant $\delta>0$, where $I_n$ denotes the identity matrix in $ \mathbb{S}^n$, the definitions of $M < 0$ and $M_1 > M_2$ are similar. Let $\|\cdot\|$ denote the standard Frobenius norm for matrices.

The following function spaces will be used in the paper.

\begin{itemize}
\item $L_{\mathcal{F}_T}^{\infty}(\mathbb{H})$: the space of $\mathcal{F}_T$-measurable $\mathbb{H}$-valued essentially bounded random variables;
\item $L_{\mathcal{F}_{T}}^2(\mathbb{H})$: the space of $\mathcal{F}_{T}$-measurable $\mathbb{H}$-valued square integrable random variables;
\item $L_{\mathbb{F}}^{\infty}(0, T ; \mathbb{H})$: the space of $\mathbb{F}$-adapted $\mathbb{H}$-valued essentially bounded processes;
\item $L_{\mathbb{F}}^{\infty, c}( 0,T; \mathbb{H})$: the space of $\mathbb{F}$-adapted $\mathbb{H}$-valued essentially bounded continuous processes;
\item $L_{\mathbb{F}}^2\left(0,T ; \mathbb{H}\right)$: the space of $\mathbb{F}$-adapted processes $X:[0, T] \times \Omega \rightarrow \mathbb{H}$ satisfying
$$ \mathbb{E}\left[\int_0^T\|X(s)\|^2 \ds \right] < \infty;$$
\item $L_{\mathbb{F}}^{2,c}(0,T; \mathbb{H})$: the space of $\mathbb{F}$-adapted continuous processes $X:[0, T] \times$ $\Omega \rightarrow \mathbb{H}$ satisfying
\begin{equation*}
\mathbb{E}\left[\sup _{0 \leq s \leq T}\|X(s)\|^2\right]<\infty.
\end{equation*}
\end{itemize}

We will also use the concept of bounded mean oscillation (BMO) martingale. For a process $\phi \in L_{\mathbb{F}}^2(0, T ; \mathbb{H})$, the stochastic integral $\int_0^{\cdot} \phi(s) \dw(s)$ is called a BMO martingale on $[0, T]$ if its $L_{\mathbb{F} }^{2,\mathrm{bmo}}$ norm on $[0, T]$ is finite, i.e.,
$$
\left\|\int_0^{\cdot} \phi(s)\dw(s)\right\|_{L_{\mathbb{F} }^{2,\mathrm{bmo}}} :=\sup _{\tau \leq T}\left(\operatorname{ess} \sup \mathbb{E}\left[\int_\tau^T\|\phi(s)\|^2 \ds \;\bigg|\; \mathcal{F}_\tau\right]\right)^{\frac{1}{2}}<\infty.
$$
Here and in the rest of paper the supremum $\sup _{\tau \leq T}$ is taken over all $\mathbb{F}$-stopping times $\tau \leq T$. With above norm, we define the space
\begin{equation*}
	L_{\mathbb{F}}^{2, \mathrm{bmo}}(0, T ; \mathbb{H}):=\left\{\phi \in L_{\mathbb{F}}^2(0, T ; \mathbb{H}) : \int_0^{\cdot} \phi(s) \dw(s) ~\text {is a BMO martingale on }[0, T]\right\}.
\end{equation*}

\subsection{Assumptions on the coefficients}
We next introduce the assumptions on the coefficients of {\bf Problem \ref{MFLQ}}.
\begin{assumption} \label{A1}
It holds that
\begin{equation*}
A, C \in L_{\mathbb{F}}^{\infty}( \R^{n \times n}), \quad B, D \in L_{\mathbb{F}}^{\infty}( \R^{n \times m}),\quad a, b\in L_{\mathbb{F}}^{2}(0,T; \R^n).
\end{equation*}
\end{assumption}
\begin{assumption} \label{A2}
(Standard case) It holds that
\begin{equation*}
Q\in L_{\mathbb{F}}^{\infty}\left(\mathbb{S}_{+}^n\right),\ R\in L_{\mathbb{F}}^{\infty}\left(\mathbb{S}_{+}^m\right),\ \bar{G} \in L^{\infty}_{\mathcal{F}_T}\left(\mathbb{S}^n\right),\ G \in L_{\mathcal{F}_T}^{\infty}\left(\mathbb{S}_{+}^n\right),\ q,p\in L_{\mathbb{F}}^{2}(0,T; \R^n), \ \xi, \eta \in L_{\mathcal{F}_{T}}^2( \R^n).
\end{equation*}
Moreover, there exists a constant $\delta>0$ such that $ R\geq \delta I_m$.
\end{assumption}

\begin{assumption} \label{A3}
(Singular case) It holds that
\begin{equation*}
Q\in L_{\mathbb{F}}^{\infty}\left(\mathbb{S}_{+}^n\right),\ R\in L_{\mathbb{F}}^{\infty}\left(\mathbb{S}_{+}^m\right),\ \bar{G} \in L^{\infty}_{\mathcal{F}_T}\left(\mathbb{S}^n\right),\ G \in L_{\mathcal{F}_T}^{\infty}\left(\mathbb{S}_{+}^n\right),\ q,p\in L_{\mathbb{F}}^{2}(0,T; \R^n), \ \xi, \eta \in L_{\mathcal{F}_{T}}^2( \R^n).
\end{equation*}
Moreover, there exists a constant $\delta>0$ such that $G\geq \delta I_n$ and $D^{\top} D\geq \delta I_n$.
\end{assumption}

\begin{assumption}\label{A4}
There exists a constant $\delta>0$ such that $G \geq \delta I_n$.
\end{assumption}

By relabeling $(u-p, a+Bp, b+Dp)\to (u,a,b)$, in what follows, we take $p\equiv0$ in the cost functional \eqref{functional} for convenience. Also, to avoid heavy notations, we sometimes omit the explicit dependence of processes on the time variable and write $\mathbb{E}$ instead of $\mathbb{E}^{0,x,u}$ if there is no risk of confusion.

Notice that
\begin{equation} \label{eqn1}
	\begin{aligned}
&~ \mathbb{E}^{0,x,u} \langle\bar{G} (\mathbb{E}^{0,x,u}[X(T)]-\eta), \mathbb{E}^{0,x,u}[X(T)]-\eta \rangle\\
=&~\langle\mathbb{E}^{0,x,u}[\bar{G}] \mathbb{E}^{0,x,u}[X(T)], \mathbb{E}^{0,x,u}[X(T)] \rangle -2\langle\mathbb{E}^{0,x,u}[\bar{G}\eta], \mathbb{E}^{0,x,u}[X(T)] \rangle
+\mathbb{E}^{0,x,u} \langle\bar{G} \eta, \eta \rangle,
\end{aligned}
\end{equation}
Hence, without loss of generality, we consider the following functional instead of \eqref{eqn1},
\begin{equation*}
	\langle\bar{G} \mathbb{E}^{0,x,u}[X(T)], \mathbb{E}^{0,x,u}[X(T)] \rangle - 2 \langle \zeta,\mathbb{E}^{0,x,u}[X(T)] \rangle,
\end{equation*}
where $\zeta = \mathbb{E}^{0,x,u}[\bar{G}\eta]$ is a constant vector, and $\bar{G}$ is a non-zero constant matrix from now on.

\begin{remark}
If one further assumes $G$ is uniformly positive or both $R$ and $Q$ are non-singular, then one can take $\eta=0$ since the term involving $\eta$ may be easily absorbed by
\begin{align*}
\mathbb{E}^{0,x,u} \left\{ \langle G (X(T)-\xi),X(T)-\xi\rangle \right\},
\end{align*}
or, thanks to It\^o's lemma,
\begin{align*}
&~\langle\mathbb{E}^{0,x,u}[\bar{G}\eta], \mathbb{E}^{0,x,u}[X(T)] \rangle\\
=&~\langle\mathbb{E}^{0,x,u}[\bar{G}\eta], x\rangle+
\mathbb{E}^{0,x,u} \int_0^T \langle\mathbb{E}^{0,x,u}[\bar{G}\eta], A(s) X(s) + B(s) u(s) + a(s) \rangle \ds.
\end{align*}
\end{remark}

\subsection{Bridge problems}

It is the mean-field term $ \langle \bar{G} \mathbb{E}^{0,x,u}[X(T)], \mathbb{E}^{0,x,u}[X(T)]\rangle$ in the cost functional \eqref{functional} that makes {\bf Problem \ref{MFLQ}} out of the scope of the classical SLQ theory.
Our idea is to introduce several bridge problems so that {\bf Problem \ref{MFLQ}} can be tackled by the classical SLQ theory.

The key is to remove the mean-field term from the problem. To this end,
we introduce the constrained control set for a fixed vector $d \in \R^n$ as below
$$
\mathcal{U}^d[0, T] := \left\{ u \in \mathcal{U}[0, T]:\ \mathbb{E}[X(T)] = d \right\}.
$$
Since for any $u \in \mathcal{U}^d[0, T]$, the only mean-field term $\langle \bar{G} \mathbb{E}^{0,x,u}[X(T)], \mathbb{E}^{0,x,u}[X(T)]\rangle$ in {\bf Problem \ref{MFLQ}} is a constant, so we can focus the classical part of the problem.
The set $\mathcal{U}^d[0, T]$ may be empty for some $d$, so we need some condition to ensure it is non-empty. Such a condition is stated as follows.

\begin{lemma} \label{feasible-condition}
Under {\bf Assumption \ref{A1}}, the set $\mathcal{U}^d[0, T]$ is non-empty for all $d \in \R^n$ if and only if
\begin{equation} \label{posi-definite}
\mathbb{E}\int_{0}^{T} \left\|Z(t) D(t) + Y(t)B(t) \right\| \dt > 0,
\end{equation}
where $(Y,Z) \in L_{\mathbb{F}}^{\infty}\left(0, T ; \R^{n \times n}\right) \times L_{\mathbb{F}}^{2}(0, T ; \R^{n \times n})$ is the unique solution to linear matrix-valued BSDE
\begin{equation*}
\left\{\begin{aligned}
\dd Y(t) &= - \left[ Y(t) A(t) +Z(t) C(t) \right] \dt +Z(t) \dw(t), \quad t \in[0, T], \\
Y(T) &= I_n.
\end{aligned}\right.
\end{equation*}
Moreover, if the condition \eqref{posi-definite} does not hold, then the set $\mathcal{U}^d[0, T]$ is empty except for one $d \in \R^n$.
\end{lemma}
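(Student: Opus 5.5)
Apply It\^o's formula to the product $Y(t)X(t)$, where $X$ solves \eqref{staeq} for an arbitrary $u\in\mathcal{U}[0,T]$ (with $p\equiv0$ after relabeling). The drift terms $YAX$ and $ZCX$ cancel against the generator of the BSDE. Taking expectations, which is legitimate because $Y$ is bounded and $Z\in L^2_{\mathbb F}$ while $X\in L^{2,c}_{\mathbb F}$ (a localization argument handles the local martingale part), gives the representation
\begin{equation*}
\mathbb{E}[X(T)] = Y(0)x + \mathbb{E}\int_0^T \big[Y(t)a(t)+Z(t)b(t)\big]\dt + \mathbb{E}\int_0^T \big[Y(t)B(t)+Z(t)D(t)\big]u(t)\dt .
\end{equation*}
So $\mathbb{E}[X(T)] = c_0 + \mathcal{L}u$, where $c_0\in\R^n$ is a fixed vector and $\mathcal{L}:\mathcal{U}[0,T]\to\R^n$, $\mathcal{L}u=\mathbb{E}\int_0^T K(t)u(t)\dt$ with $K:=YB+ZD\in L^2_{\mathbb F}(0,T;\R^{n\times m})$, is a bounded linear operator. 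Existence and uniqueness of $(Y,Z)$, including boundedness of $Y$, follows from standard linear BSDE theory with bounded coefficients; I take this as given.

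The lemma then reduces to linear algebra. $\mathcal{U}^d[0,T]$ is nonempty exactly when $d-c_0\in\operatorname{Range}(\mathcal L)$. The range is a subspace of $\R^n$, so it is automatically closed. I will read condition \eqref{posi-definite} as the statement that $\mathcal L$ is onto, i.e.\ that $v^\top K\not\equiv0$ in $L^2$ for every nonzero $v\in\R^n$; I expect the paper's norm condition to be meant this way. Under that reading the proof runs as follows.

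First I identify the orthogonal complement of the range. A vector $v$ annihilates $\operatorname{Range}\mathcal L$ iff $\mathbb{E}\int_0^T v^\top K u\,\dt=0$ for all $u$. Choosing $u=K^\top v$, which is admissible since it lies in $L^2_{\mathbb F}$, shows this happens iff $K^\top v=0$ a.e. Hence $\mathcal L$ is surjective iff no nonzero $v$ has $v^\top K\equiv0$. Surjectivity is the same as nonemptiness of $\mathcal U^d$ for every $d$.

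Second, I treat the degenerate case. If $K\equiv0$, which is what the literal failure of \eqref{posi-definite} means, then $\mathcal L=0$. In that case $\mathbb{E}[X(T)]=c_0$ for every control, so $\mathcal U^d$ is nonempty only for $d=c_0$. This proves the final "moreover" statement.

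The main obstacle is the gap between the literal condition $\mathbb{E}\int\|K\|>0$ and full rank of $\mathcal L$. When $K\neq0$ but $v^\top K\equiv0$ for some $v\neq0$, the range is a proper nontrivial subspace. Then $\mathcal U^d$ is empty for many $d$, yet nonempty for more than one $d$. So the "iff" and the "except for one $d$" clause hold exactly only if \eqref{posi-definite} is read as holding for each direction, $\mathbb{E}\int_0^T\|v^\top K\|\dt>0$ for all $v\neq0$, or equivalently as positive definiteness of $\mathbb{E}\int_0^T KK^\top\dt$. I would present the proof with that interpretation. I would also note that it agrees with the literal condition when $n=1$, and that $\mathcal L$ restricted to $u=K^\top w$ gives the Gram matrix $\mathbb{E}\int_0^T KK^\top\dt$, which makes the surjectivity explicit.
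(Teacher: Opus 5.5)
Your analytic core is correct, and it is the standard duality route behind the results the paper cites in place of a proof (Zhang--Zhang, Theorem 4.1; Hu--Shi--Xu, Theorem 5.3). It\^o's formula for $YX$ gives $\mathbb{E}[X(T)]=c_0+\mathcal{L}u$ with $K=YB+ZD$. Testing against $u=K^\top v$ shows $(\operatorname{Range}\mathcal{L})^\perp=\ker M$, where $M:=\mathbb{E}\int_0^T KK^\top\dt$. So the set of $d$ with $\mathcal{U}^d[0,T]\neq\emptyset$ is exactly the affine subspace $c_0+\operatorname{Range}M$. You are also right that the literal condition $\mathbb{E}\int_0^T\|ZD+YB\|\dt>0$ only says $K\not\equiv 0$, so the ``if'' direction fails for $n\geq 2$. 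Make this airtight with an explicit instance. Take $n=m=2$, $A=C=D=0$, $a=b=0$ and $B=\mathrm{diag}(1,0)$. Then $Y\equiv I_2$, $Z\equiv 0$ and $K=B\neq 0$, yet $\mathbb{E}[X_2(T)]=x_2$ for every control.

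The error is in your conclusion that both the equivalence and the ``except for one $d$'' clause hold once the condition is read directionally, i.e.\ as $M>0$. Under that reading, failure of the condition only means $M$ is singular. The feasible set $c_0+\operatorname{Range}M$ then has dimension $\operatorname{rank}M$, so it contains more than one point whenever $M\neq 0$. This is precisely your own observation, and the example above realizes it: there $M=T\,\mathrm{diag}(1,0)$ and every $d$ with $d_2=x_2$ is feasible.

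In fact the lemma asserts a dichotomy (either every $d$ is feasible or exactly one is), and this is false for $n\geq 2$ whatever the condition means. The ``moreover'' clause holds under the literal reading, as your second step correctly shows. The ``iff'' holds under the directional reading. Neither reading gives both; the two readings coincide only when $n=1$, the Hu--Shi--Xu setting.

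So do not present a proof of the lemma ``under an interpretation''. State and prove the correct trichotomy instead:
\begin{itemize}
\item all $d$ are feasible iff $M>0$;
\item exactly one $d$ is feasible iff $K\equiv 0$;
\item otherwise the feasible $d$ form an affine subspace of dimension $\operatorname{rank}M\in\{1,\dots,n-1\}$.
\end{itemize}
The hypothesis the paper actually needs downstream is $M>0$, not the stated condition. For example, Theorem~\ref{main1} takes $d$ arbitrary to conclude that $\Psi$ is invertible.
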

\begin{proof}
The proof is similar to Theorem 4.1 in Zhang and Zhang \cite{zhang2023stochastic}, in which the feasibility condition for the homogeneous multi-dimensional state equation is proved. The idea of proof is also similar to Theorem 5.3 in Hu, Shi and Xu \cite{MR4729330}, in which the non-homogeneous one-dimensional case is studied. So we omit the arguments here.
\end{proof}

When the condition \eqref{posi-definite} does not hold, the above result shows that $\mathbb{E}^{0,x,u}[X(T)]$ is a constant for all controls $u \in \mathcal{U}[0, T]$, so the mean-field term
$ \langle \bar{G} \mathbb{E}^{0,x,u}[X(T)], \mathbb{E}^{0,x,u}[X(T)]\rangle$ in the cost functional \eqref{functional} does not play a role and {\bf Problem \ref{MFLQ}} becomes a classical SLQ problem. To avoid this degenerate case, we assume the condition \eqref{posi-definite} holds true in the rest of this paper without claim.

The following SLQ problem with control constraint serves as a bridge to study {\bf Problem \ref{MFLQ}}.
\begin{prob} \label{constrained-SLQ}
For any given $x, d \in \R^n$, find $\bar{u} \in \mathcal{U}^d[0, T]$ such that $J\bigl(x ; \bar{u}\bigr)= V(x,d)$ where
$$
V(x,d) := \inf_{u \in \mathcal{U}^d[0, T]} J(x ; u).
$$
\end{prob}
Once the above problem is solved, we can solve the original {\bf Problem \ref{MFLQ}} by selecting the best terminal expectation $d$.

A standard approach to deal with the constrained {\bf Problem \ref{constrained-SLQ}} is to introduce a Lagrange multiplier $\lambda$ and apply the well-known Lagrange duality theorem. To see this, for any given $x, d, \lambda \in \R^n$ and $u \in \mathcal{U}[0, T]$, we define a new cost functional
\begin{align} \label{J1def}
J_1(x,d,\lambda ; u) :=&~\mathbb{E}\int_0^T \Bigl[ \langle Q(s) (X(s)-q(s)), X(s)-q(s) \rangle + \langle R(s)u(s), u(s) \rangle \Bigr] \ds \nn\\
& + \mathbb{E} \Bigl\{ \langle G (X(T)-\xi), X(T)-\xi \rangle + \langle \bar{G} d, d \rangle -2 \langle \zeta,d \rangle\Bigr\} + 2 \langle \lambda, \mathbb{E}[X(T)]-d \rangle\nn\\
=&~\mathbb{E}\bigg\{\int_0^T \Bigl[ \langle Q(s) (X(s)-q(s)), X(s)-q(s) \rangle + \langle R(s)u(s), u(s) \rangle \Bigr] \ds \nn\\
& + \langle G (X(T)-\xi), X(T)-\xi \rangle + 2 \langle \lambda, X(T)\rangle \bigg\} + \langle\bar{G} d, d \rangle - 2 \langle \zeta+ \lambda, d\rangle.
\end{align}
This cost functional does not involve any mean-field terms, so the following unconstrained stochastic optimal control problem can be tackled by classical theory.
\begin{prob} \label{slq}
For any given $x, d, \lambda \in \R^n$, find $\bar{u} \in \mathcal{U}[0, T]$ such that $J_1\bigl(x,d,\lambda ; \bar{u}\bigr)=V_1(x,d,\lambda)$, where
$$
V_1(x,d,\lambda) := \inf_{u \in \mathcal{U}[0, T]} J_1(x,d,\lambda ; u).
$$
\end{prob}

By \eqref{J1def}, we see the optimal control to { \bf Problem \ref{slq}} does not depend on $d$, and it holds that
\begin{align*}
V_1(x,d,\lambda)=V_1(x,0,\lambda)+ \langle\bar{G} d, d \rangle - 2 \langle \zeta+ \lambda, d\rangle,
\end{align*}
for any given $x, d, \lambda \in \R^n$.
In addition, the Lagrange duality theorem gives
\begin{align*}
V(x,d)=\sup_{\lambda \in \R^n}V_1(x,d,\lambda).
\end{align*}

To move forward with our problem, we need {\bf Assumption \ref{A4}}, which allows us to define a new cost functional
\begin{equation*}
\begin{aligned}
J_2(x,\lambda ; u) :=&~ \mathbb{E} \bigg\{\int_0^T \left[ \langle Q(s) (X(s)-q(s)), X(s)-q(s) \rangle +\langle R(s)u(s), u(s) \rangle \right] \ds
\\ &\qquad\qquad + \langle G (X(T)-\xi+G^{-1}\lambda ),X(T)-\xi +G^{-1}\lambda \rangle\bigg\}.
\end{aligned}
\end{equation*}
Note this cost functional differs from $J_1(x,d,\lambda; u)$ by only a constant, so the following optimal control problem is equivalent to { \bf Problem \ref{slq}}.
\begin{prob} \label{SLQ}
For any given $x,\lambda \in \R^n$, find $\bar{u} \in \mathcal{U}[0, T]$ such that
$J_2\left(x,\lambda ; \bar{u}\right)=V_2(x,\lambda)$, where
\begin{equation*}
V_2(x,\lambda) : = \inf _{u \in \mathcal{U}[0, T]} J_2(x,\lambda ; u).
\end{equation*}
\end{prob}
Under standard assumptions, {\bf Problems \ref{slq}} and {\bf \ref{SLQ} } admit the same unique optimal control. Moreover, it is straightforward to verify that
\begin{equation} \label{equality}
V_1(x,d,\lambda) = V_2(x,\lambda) + 2 \langle \mathbb{E}[ \xi], \lambda \rangle - \langle \mathbb{E} [G^{-1}]\lambda,\lambda \rangle + \langle\bar{G} d, d \rangle - 2 \langle \zeta+ \lambda, d\rangle.
\end{equation}

{ \bf Problems \ref{constrained-SLQ}}, {\bf \ref{slq}} and {\bf \ref{SLQ}} are all regarded as bridge problems to solve the original {\bf Problem \ref{MFLQ}}.
Since they are SLQ problems, we may solve them using existing theory. In particular, we study their SRE first.

\section{Solution to \textbf{Problem \ref{MFLQ}}}\label{sec3}
We will solve \textbf{Problem \ref{MFLQ}} in this section by solving the bridge problems first.
In Subsection \ref{positiveSRE}, we show the SRE \eqref{bsde1} admits a uniformly positive solution.
Then use it to solve the bridge problems and \textbf{Problem \ref{MFLQ}} in Subsection \ref{SREsolution}.

\subsection{Uniformly Positive Solution for SRE}\label{positiveSRE}

We present a new result on the existence of a uniformly positive solution for the SRE \eqref{bsde1} in this section.

In the singular case, Kohlmann and Tang \cite{kohlmann2002global} and \cite{kohlmann2003minimization} proved that the SRE \eqref{bsde1} admits unique positive solutions in one-dimensional and multi-dimensional cases, respectively,
so we only need to resolve the standard case.
Clearly, to let the solution be uniformly positive, it is necessary to assume that the terminal condition $G$ is uniformly positive, i.e., {\bf Assumption \ref{A4}} holds.
\begin{theorem} \label{SRE}
Under {\bf Assumptions \ref{A1}}, {\bf \ref{A2}} and {\bf \ref{A4}}, SRE \eqref{bsde1} admits a unique solution $(P, \Lambda) \in L_{\mathbb{F}}^{\infty,c}(0, T; \mathbb{S}_{+}^n) \times L_{\mathbb{F}}^{2,\mathrm{bmo}}(0, T; \mathbb{S}^n)$. Moreover, $P$ is uniformly positive.
\end{theorem}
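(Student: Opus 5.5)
Existence and uniqueness of $(P,\Lambda)\in L_{\mathbb{F}}^{\infty,c}(0,T;\mathbb{S}_{+}^n)\times L_{\mathbb{F}}^{2,\mathrm{bmo}}(0,T;\mathbb{S}^n)$ in the standard case ($R\geq\delta I_m$, $Q\geq0$, $G\geq0$) is Tang's theorem \cite{tang2003general}, so I will take it as given. The only new claim is the uniform lower bound $P\geq \varepsilon I_n$. Since there is no comparison theorem for matrix BSDEs, I will obtain this bound from the control representation of $P$ together with a duality argument.

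\textbf{Step 1 (control representation).} By the standard verification via It\^o's formula on $\langle P X,X\rangle$ and completion of squares, for every stopping time $\tau$ and $x\in L^2_{\mathcal F_\tau}(\R^n)$,
\[
\langle P(\tau)x,x\rangle=\operatorname*{ess\,inf}_{u}\mathbb{E}\Big[\int_\tau^T\big(\langle QX,X\rangle+\langle Ru,u\rangle\big)\ds+\langle GX(T),X(T)\rangle\,\Big|\,\mathcal F_\tau\Big],
\]
where $X$ solves the homogeneous equation ($a=b=0$) on $[\tau,T]$ with $X(\tau)=x$. Dropping $Q\geq0$ and using $G\geq\delta I_n$ and $R\geq\delta I_m$, it therefore suffices to find $\varepsilon>0$, independent of $\tau$, $x$ and $u$, such that
\[
\mathbb{E}\Big[\int_\tau^T\|u\|^2\ds+\|X(T)\|^2\,\Big|\,\mathcal F_\tau\Big]\geq \varepsilon\|x\|^2 .
\]

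\textbf{Step 2 (a priori estimate, the main point).} To bound $x$ from above, I will run the state backward with a dual process. Let $(\Phi,\Psi)$ solve the linear matrix BSDE
\[
\dd\Phi=-(\Phi A+\Psi C)\ds+\Psi\dw,\qquad \Phi(T)=I_n .
\]
By boundedness of $A,C$, $\Phi$ is bounded and $\Psi$ is BMO (the same $Y,Z$ as in Lemma \ref{feasible-condition}). It\^o's formula gives
\[
\Phi(\tau)x=\mathbb{E}\Big[X(T)-\int_\tau^T(\Phi B+\Psi D)u\,\ds\,\Big|\,\mathcal F_\tau\Big].
\]
The difficulty is that $\Phi(\tau)$ might be degenerate, so I will instead use the adjoint linear SDE. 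Take $\dd \Gamma=\Gamma(-A+C^2)\ds-\Gamma C\dw$, matched so that $\dd(\Gamma X)=\Gamma(B-CD)u\ds+\Gamma Du\dw$, with $\Gamma(\tau)=I_n$. Since $\Gamma$ solves a linear SDE with bounded coefficients, $\mathbb{E}[\sup_{s\in[\tau,T]}\|\Gamma(s)\|^k|\mathcal F_\tau]\leq C_k$ uniformly. Then
\[
x=\Gamma(T)X(T)-\int_\tau^T\Gamma(B-CD)u\,\ds-\int_\tau^T\Gamma Du\,\dw .
\]
Taking conditional $L^1$ norms and applying Cauchy--Schwarz and the BDG inequality together with the uniform moment bounds gives
\[
\|x\|\leq C\Big(\mathbb{E}\Big[\|X(T)\|^2+\int_\tau^T\|u\|^2\ds\,\Big|\,\mathcal F_\tau\Big]\Big)^{1/2},
\]
with $C$ depending only on $T$ and the bounds of $A,B,C,D$. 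Squaring yields the required $\varepsilon=C^{-2}$, and hence $P(\tau)\geq \delta C^{-2}I_n$.

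\textbf{Step 3.} Take $\tau=t$ deterministic and use continuity of $P$ to conclude that $P\geq\varepsilon I_n$ on $[0,T]$ a.s. I expect the main obstacle to be justifying the representation in Step 1 for random initial data with only BMO integrability of $\Lambda$. I will handle this by localizing with stopping times and passing to the limit using $P\in L^\infty$ and the admissibility of the feedback $u=\Theta X$, which is known from Tang's construction.
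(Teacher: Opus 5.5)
Your proposal is correct, and its outer structure matches the paper's. Existence and uniqueness are taken from Tang, and the lower bound is reduced via the control representation of $\langle P(t)x,x\rangle$: the paper simply invokes Tang's dynamic programming principle, whereas you sketch the verification by completion of squares and localization. The conclusion then follows from deterministic $t$, $x$ and the continuity of $P$. The genuine difference is in the key a priori estimate $\|x\|^2\le C\,\mathbb{E}^{\mathcal{F}_t}\big[\|X(T)\|^2+\int_t^T\|u\|^2\ds\big]$. The paper proves it by an energy argument. It\^o's formula for $\|X\|^2$ between $s$ and $T$ gives $\rho_s\le\rho_T+\alpha\kappa_t+\beta\int_s^T\rho_r\dr$ for $\rho_s=\mathbb{E}^{\mathcal{F}_t}\|X(s)\|^2$, and a backward Gronwall inequality yields the estimate with the explicit constant $e^{\beta T}(1\vee\alpha)$. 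You instead invert the homogeneous flow through the adjoint SDE $\dd\Gamma=\Gamma(-A+C^2)\ds-\Gamma C\dw$, $\Gamma(t)=I_n$; your computation of $\dd(\Gamma X)$ is right. This gives the exact identity $x=\Gamma(T)X(T)-\int_t^T\Gamma(B-CD)u\ds-\int_t^T\Gamma Du\dw$, and you close with Cauchy--Schwarz, the conditional Davis--BDG inequality for $p=1$, and uniform conditional moment bounds on $\sup_s\|\Gamma(s)\|$. Your route makes transparent why no degeneracy can occur: the forward state flow is always invertible, unlike the backward process $\Phi$ you rightly discarded, which can be singular when $n\ge2$ and the coefficients are random. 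However, it needs more stochastic machinery and gives a less explicit constant, while the paper's argument uses only It\^o's formula and Gronwall. The generality of stopping times $\tau$ and random initial data in your Step 1 is not needed, since Step 3 only uses deterministic $t$ and $x$.
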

\begin{proof}
The solvability of multi-dimensional SRE \eqref{bsde1} was given by Tang \cite{tang2003general}, so it only remains to show that $P$ is uniformly positive. The key step is to derive an appropriate priori estimate for $\mathbb{E}^{\mathcal{F}_t}\|X(\cdot)\|^2$ on $[t,T]$.

Now we fix an arbitrary time $t\in[0,T)$ and set $X(t)=x\in\R^n$.
Applying It\^{o}'s formula, we obtain
\begin{equation*}
\begin{aligned}
\mathbb{E}^{\mathcal{F}_t}\langle X(T),X(T) \rangle =&~ \mathbb{E}^{\mathcal{F}_t}\langle X(s), X(s)\rangle + 2 \mathbb{E}^{\mathcal{F}_t} \int_s^T \langle A X+B u, X \rangle\dr +\mathbb{E}^{\mathcal{F}_t} \int_s^T \|C X+D u \|^2 \dr \\
\geq &~ \mathbb{E}^{\mathcal{F}_t}\|X(s)\|^2 - \alpha \mathbb{E}^{\mathcal{F}_t} \int_s^T\|u\|^2 \dr-\beta \mathbb{E}^{\mathcal{F}_t} \int_s^T\|X\|^2 \dr,~~s\in[t,T],
\end{aligned}
\end{equation*}
where the constant $\alpha>0$ depends only on the coefficients $B, C, D$ and the constant $\beta>0$ depends on the coefficients $A, B, C, D$. Set
\begin{equation*}
\rho_s:=\mathbb{E}^{\mathcal{F}_t}\|X(s)\|^2\ \ {\rm and}\ \ \kappa_s := \mathbb{E}^{\mathcal{F}_t}\int_s^T\|u\|^2 \dr, \qquad s\in[t,T].
\end{equation*}
The above estimate, together with the fact $\kappa_s\leq \kappa_t$, leads to
\begin{equation*} 
\rho_s\leq \rho_T+ \alpha \kappa_t + \beta \int_s^T \rho_r \dr,\qquad s\in[t,T].
\end{equation*}
By Gronwall's inequality it yields
\begin{equation}\label{ineq1}
\rho_s \leq (\rho_T + \alpha \kappa_t)e^{\beta(T-s)} \leq ( 1 \vee \alpha)(\rho_T + \kappa_t)e^{\beta T},\qquad s\in[t,T].
\end{equation}
By the dynamic programming principle for general SLQ optimal control problems (see Theorem 4.6 in Tang \cite{tang2015dynamic}), we have
\begin{equation*}
\langle P(t)x,x \rangle= \essinf _{u \in \mathcal{U}[t, T]} \mathbb{E}^{t,x,u}\left[\left\langle G X(T), X(T)\right\rangle+\int_t^T\left(\left\langle Q(s) X(s), X(s)\right\rangle+\left\langle R(s) u(s), u(s)\right\rangle \right) \ds\right].
\end{equation*}
Then it follows from {\bf Assumptions \ref{A1}}, {\bf\ref{A2}}, {\bf \ref{A4}} and \eqref{ineq1} that
\begin{equation*}
\begin{aligned}
\langle P(t)x,x \rangle \geq &~ \delta \mathbb{E}^{\mathcal{F}_t}\| X(T)\|^2 + \delta \mathbb{E}^{\mathcal{F}_t} \int_t^T\|u\|^2 \ds =\delta (\rho_T + \kappa_t)
\geq \frac{\delta e^{-\beta T}}{1 \vee \alpha} \rho_t = \frac{\delta e^{-\beta T}}{1 \vee \alpha} \|x\|^2.
\end{aligned}
\end{equation*}
Since $x\in\R^n$ is arbitrary chosen, we obtain $\essinf P(t) \geq \frac{\delta e^{-\beta T}}{1 \vee \alpha} I_{n}$ for any fixed $t \in [0,T]$.
Bearing in mind the fact that $P$ admits continuous trajectories, we conclude that $P$ is uniformly positive.
\end{proof}

\begin{remark}
In particular, when $n=1$, under {\bf Assumptions \ref{A1}}, {\bf\ref{A2}} and {\bf \ref{A4}}, it is well-known that the SRE \eqref{bsde1} admits a uniformly positive solution; see, e.g. Hu, Shi and Xu \cite[Lemma 3.1]{MR4729330}.

\end{remark}

\subsection{Solution to bridge problems and {\bf Problem \ref{MFLQ}}}\label{SREsolution}
We start from considering {\bf Problem \ref{slq}}, namely an unconstrained non-homogeneous SLQ optimal control problem. To characterize its value function and the associated optimal feedback control, for $x = (x_1, x_2,\dots,x_n)^{\top}\in \R^n$ and $\lambda = (\lambda_1,\lambda_2,\dots,\lambda_n)^{\top} \in \R^n$, we introduce a linear and non-homogeneous BSDE involving unbounded coefficients
\begin{equation} \label{bsde2}
\left\{\begin{aligned}
& \dd H = -g_2(H,\Gamma) \dt + \Gamma \dw, \quad t \in[0, T], \\
& H(T)= \lambda - G\xi,
\end{aligned}\right.
\end{equation}
where
\begin{gather*}
g_2(H,\Gamma) :=~ \hat{A}^{\top}H + \hat{C}^{\top}\Gamma + ( \hat{C}^\top P + \Lambda )b + Pa - Qq,\\
\hat{A} :=~ A + B\Theta(P,\Lambda), \qquad \hat{C} := C+D\Theta(P,\Lambda),
\end{gather*}
and
$(P, \Lambda) \in L_{\mathbb{F}}^{\infty,c}(0, T; \mathbb{S}_{+}^n) \times L_{\mathbb{F}}^{2,\mathrm{bmo}}(0, T; \mathbb{S}^n)$ is the unique solution to SRE \eqref{bsde1}, here $\Theta$ is defined in \eqref{thetafun}.

The solvability of BSDE \eqref{bsde2} follows immediately from Theorem 3.11 in Kohlmann and Tang \cite{kohlmann2003minimization}.
\begin{proposition} \label{unboundedbsde}
Under {\bf Assumptions \ref{A1}} and {\bf \ref{A2}} or {\bf Assumptions \ref{A1}} and {\bf\ref{A3}}, the multi-dimensional linear BSDE \eqref{bsde2} admits a unique solution $(H, \Gamma) \in L_{\mathbb{F}}^{2,c}\left(0, T ; \R^n\right) \times L_{\mathbb{F}}^{2}\left(0, T ; \R^n\right)$.
\end{proposition}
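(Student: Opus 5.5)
The plan is to reduce the problem to a linear BSDE whose coefficients are integrable in the BMO sense, and then use the BMO/Girsanov machinery behind Theorem 3.11 of Kohlmann and Tang \cite{kohlmann2003minimization}. First we collect the regularity of the coefficients. In the standard case, Theorem \ref{SRE} (or Tang's existence result when only Assumptions \ref{A1}, \ref{A2} hold) gives $P\in L^{\infty,c}_{\mathbb{F}}$ and $\Lambda\in L^{2,\mathrm{bmo}}_{\mathbb{F}}$. Since $D^\top P D+R\geq \delta I_m$ and $B,D,P$ are bounded, \eqref{thetafun} gives $\|\Theta\|\leq K(1+\|\Lambda\|)$. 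Hence $\int_0^\cdot \Theta\,\dw$ is a BMO martingale. It follows that $\hat A$ and $\hat C$ are bounded plus a term of BMO type: $\|\hat A\|+\|\hat C\|\leq K(1+\|\Lambda\|)$. In the singular case, Assumption \ref{A3} gives $P\geq cI_n$ by Kohlmann--Tang, so $D^\top PD+R\geq c\delta I$, and the same bound on $\Theta$ follows. The inhomogeneous term $f:=(\hat C^\top P+\Lambda)b+Pa-Qq$ then satisfies $\|f\|\leq K(1+\|\Lambda\|)(\|b\|)+K(\|a\|+\|q\|)$.

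Next we remove the unbounded $\hat C^\top\Gamma$ term. Because $\hat C$ is BMO, the stochastic exponential $\mathcal{E}(\int \hat C\,\dw)$, applied columnwise, is a true martingale. Under the Girsanov measure, the BSDE becomes $\dd H=-(\hat A^\top H+f)\dt+\Gamma\,\dd\tilde W$. The $\hat A^\top H$ term can then be handled by the fundamental matrix $\Phi$ solving $\dd\Phi=\Phi\hat A\,\dt$, or equivalently by the adjoint forward SDE $\dd\Psi=\hat A\Psi\dt+\hat C\Psi\dw$. This yields the candidate representation
\begin{equation*}
H(t)=\Psi(t)^{-\top}\mathbb{E}^{\mathcal{F}_t}\Big[\Psi(T)^{\top}(\lambda-G\xi)+\int_t^T\Psi(s)^{\top}f(s)\ds\Big].
\end{equation*}
Here $\Psi$ is the closed-loop state flow; it is the "duality" solution. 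The process $\Gamma$ is then obtained from the martingale representation theorem.

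The main obstacle is integrability. We must show that this $H$ lies in $L^{2,c}_{\mathbb{F}}$ and $\Gamma$ in $L^2_{\mathbb{F}}$ even though the coefficients are only BMO and $f$ carries the product $\Lambda b$. The plan is to use the reverse Hölder inequality satisfied by BMO exponentials (Kazamaki) to get $\mathbb{E}^{\mathcal{F}_t}\sup_s\|\Psi(s)\Psi(t)^{-1}\|^{p}\leq K$ for some $p>1$. For $\int\|\Lambda\|\|b\|$ we will use the energy inequality for BMO martingales together with the closed-loop estimate of the SLQ problem. Concretely, $\Psi$ is the state of the optimally controlled system, so $\mathbb{E}\sup\|\Psi\|^2$ is controlled by the value function, which is bounded by $\|P\|_\infty$. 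If the duality estimates are too delicate, the fallback is to apply Itô's formula to $\langle PH,H\rangle$-type quantities, or simply to quote Kohlmann--Tang \cite[Theorem 3.11]{kohlmann2003minimization}, which treats exactly linear BSDEs with coefficients $(\hat A,\hat C)$ generated by a bounded Riccati solution. Uniqueness follows from linearity: the difference of two solutions solves the homogeneous equation with zero terminal value, and the representation above forces it to be zero.
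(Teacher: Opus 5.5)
The paper proves this proposition only by citing Theorem 3.11 of Kohlmann and Tang \cite{kohlmann2003minimization}, so your fallback is exactly the paper's proof. The self-contained route you lead with, however, breaks down in two places.

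First, the Girsanov reduction is not available for $n\ge 2$. Since $W$ is scalar and $\Gamma$ is $\R^n$-valued, a change of measure $\dw=\dd\tilde W+\theta\,\dt$ only turns the generator into $\hat A^\top H+(\hat C^\top-\theta I_n)\Gamma+f$. It therefore removes $\hat C^\top\Gamma$ only when $\hat C$ is a scalar multiple of the identity. A ``columnwise'' change of measure has no meaning, because all components of $H$ are coupled through $\hat C^\top\Gamma$ and must live under one measure. The adjoint flow $\dd\Psi=\hat A\Psi\,\dt+\hat C\Psi\,\dw$ is not equivalent to that step; it is the correct tool on its own, and your representation formula for $H$ is right.

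Second, Kazamaki's reverse H\"older inequality is a scalar result. It is not available in general for a matrix equation whose coefficient is only of BMO type, so you cannot obtain $\mathbb{E}^{\mathcal F_t}\sup_s\|\Psi(s)\Psi(t)^{-1}\|^p\le K$ this way. What you do have is the control-theoretic bound you mention at the end. Since $\Psi(\cdot)\Psi(t)^{-1}x$ is the optimal closed-loop state from $(t,x)$, the bound $\langle P(t)x,x\rangle\le\|P\|_\infty\|x\|^2$ gives $\mathbb{E}^{\mathcal F_t}\sup_{s\ge t}\|\Psi(s)\Psi(t)^{-1}\|^2\le K$. This suffices for uniqueness. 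For the difference $(\delta H,\delta\Gamma)$ of two solutions, $\Psi^\top\delta H$ is a local martingale dominated by $\sup_t\|\Psi(t)\|\,\sup_t\|\delta H(t)\|\in L^1$. It is therefore a uniformly integrable martingale with zero terminal value, and invertibility of $\Psi$ does the rest.

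The closed-loop bound is not enough for existence in $L^{2,c}_{\mathbb F}\times L^2_{\mathbb F}$. With only conditional second moments of the flow, your formula gives $\|H(t)\|^2\le K\,\mathbb{E}^{\mathcal F_t}[\zeta]$, where $\zeta=(\|\lambda-G\xi\|+\int_0^T\|f\|\ds)^2\in L^1$. But $\mathbb{E}\sup_t\mathbb{E}^{\mathcal F_t}[\zeta]$ is not controlled by $\mathbb{E}\zeta$, since Doob's inequality fails in $L^1$. Likewise, reading off $\Gamma=\Psi^{-\top}Z-\hat C^\top H$ from the martingale representation involves the inverse flow, on which the value function gives no information.

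The missing idea is to build the solution from the control problem rather than from the flow:
\begin{itemize}
\item Take the optimal pair $(\hat X,\hat u)$ of \textbf{Problem \ref{slq}} and its adjoint $(\hat Y,\hat Z)$, which solves a linear BSDE with bounded coefficients and $L^2$ data.
\item Set $H=\hat Y-P\hat X$ and $\Gamma=\hat Z-P(C\hat X+D\hat u+b)-\Lambda\hat X$. This is the relation recorded in Proposition \ref{relationship}, which the paper attributes to the same Kohlmann--Tang theorems.
\item $H\in L^{2,c}_{\mathbb F}$ is then immediate.
\item $\Lambda\hat X\in L^2_{\mathbb F}$ follows from $\mathbb{E}\int_0^T\|\Lambda\|^2\|\hat X\|^2\ds\le\big\|\int_0^{\cdot}\Lambda\,\dw\big\|^2_{L^{2,\mathrm{bmo}}_{\mathbb F}}\,\mathbb{E}\sup_t\|\hat X(t)\|^2$. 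To see this, dominate $\|\hat X(s)\|^2$ by the increasing process $\sup_{r\le s}\|\hat X(r)\|^2$, integrate by parts and condition on $\mathcal F_s$.
\item The stationarity condition $B^\top\hat Y+D^\top\hat Z+R\hat u=0$ gives $\hat u=\Theta\hat X-(D^\top PD+R)^{-1}(B^\top H+D^\top\Gamma+D^\top Pb)$. Combined with It\^o's formula for $P\hat X$, this shows that $(H,\Gamma)$ solves \eqref{bsde2}.
\end{itemize}
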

\begin{remark}
Actually, there are existing well-posedness results for linear BSDEs in general forms involving unbounded coefficients in $L_{\mathbb{F} }^{2,\mathrm{bmo}}(0, T ; \R^{n \times n})$. See Hu, Shi and Xu \cite[Lemma 3.6]{MR4729330} and Wang, Xu and Zhang \cite[Lemma A.1]{WXZ24} for one-dimensional equation and Chen and Luo \cite[Theorems 2.8 and 3.1]{MR4893237} for multi-dimensional equation.
\end{remark}

Moreover, referring to the proof of Theorem 3.8 in Kohlmann and Tang \cite{kohlmann2003minimization}, we can obtain the optimal feedback control and value function of {\bf Problem \ref{slq}}, with the help of BSDEs \eqref{bsde1} and \eqref{bsde2}.
\begin{theorem}\label{valuefunction}
Assume {\bf Assumptions \ref{A1}} and {\bf \ref{A2}} hold. Denote respectively by $(P, \Lambda)$ and $(H, \Gamma)$ the solutions to BSDEs \eqref{bsde1} and \eqref{bsde2}. Then
the linear SDE
\begin{equation*}\label{sxz1}
\left\{\begin{aligned}
\dd \bar{X}(s) =&~ \left\{ \hat{A}(s) \bar{X}(s) + B(s)u_0(s) \right\} \ds + \left\{ \hat{C}(s) \bar{X}(s) + D(s)u_0(s) \right\} \dw(s), \quad s\in[0,T], \\
\bar{X}(0) =&~ x\in\R^n,
\end{aligned}\right.
\end{equation*}
where
\begin{equation*}
u_0 = - \left(D^{\top} P D + R\right)^{-1}\left( B^{\top}H+D^{\top}\Gamma + D^\top Pb \right),
\end{equation*}
admits a unique solution $\bar{X} \in L_{\mathbb{F}}^{2,c}(0,T; \R^n)$.
Moreover, the optimal feedback control of {\bf Problem \ref{slq}} is
\begin{equation*}
\bar{u} =\Theta(P,\Lambda) \bar{X} + u_0,
\end{equation*}
and the value function of {\bf Problem \ref{slq}} is
\begin{equation*}
V_1(x,d,\lambda) = J_1\left(x,\lambda ; \bar{u}\right) = \langle P(0) x, x \rangle + 2 \langle H(0), x \rangle + \langle\bar{G} d, d \rangle - 2 \langle \zeta+ \lambda, d\rangle+ c,
\end{equation*}
where
\begin{equation*}
c: = \mathbb{E}\left\{ \langle G\xi, \xi \rangle + \int_{0}^{T}\left[ \langle Qq,q\rangle + \langle Pb,b \rangle +2\langle H,a\rangle + 2\langle \Gamma, b\rangle - \langle (D^{\top} P D + R) u_0, u_0\rangle \right]\ds \right\},
\end{equation*}
is a constant independent of $(x,d,\lambda)$.
\end{theorem}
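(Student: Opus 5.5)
The plan is a completion-of-squares argument. For an arbitrary $u\in\mathcal{U}[0,T]$ with state $X$, I will apply It\^o's formula to $\langle P X, X\rangle + 2\langle H, X\rangle$ on $[0,T]$ and add the running cost to the result.

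\textbf{Step 1 (It\^o expansion).} I will use the dynamics of $P$ from \eqref{bsde1}, of $H$ from \eqref{bsde2}, and of $X$ from \eqref{staeq}. Writing $\mathcal{R}:=D^\top P D+R$, the drift of $\langle PX,X\rangle + 2\langle H,X\rangle + \langle QX,X\rangle-2\langle Qq,X\rangle+\langle Ru,u\rangle$ should collapse to
\[
\big\langle \mathcal{R}(u-\Theta X-u_0),\,u-\Theta X-u_0\big\rangle-\langle \mathcal{R}u_0,u_0\rangle+\langle Pb,b\rangle+2\langle H,a\rangle+2\langle\Gamma,b\rangle .
\]
The identity rests on three facts. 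First, $g_1$ in \eqref{bsde1} is exactly the quadratic-in-$X$ remainder after completing the square with $\Theta$. Second, the linear-in-$X$ terms of $g_2$, namely $\hat A^\top H+\hat C^\top\Gamma+(\hat C^\top P+\Lambda)b+Pa-Qq$, are designed to cancel the cross terms once $u_0$ is chosen as in the statement. Third, the constant terms remain. This is a routine but lengthy algebraic check, and I will verify it term by term using $\mathcal{R}\Theta=-(B^\top P+D^\top\Lambda+D^\top PC)$.

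\textbf{Step 2 (justifying expectations).} Next I will take expectations after localizing with stopping times $\tau_k\uparrow T$, so that the stochastic integrals are true martingales. To pass to the limit I will use $P\in L^{\infty,c}_{\mathbb F}$, $H\in L^{2,c}_{\mathbb F}$, $X\in L^{2,c}_{\mathbb F}$, and $\Gamma\in L^2_{\mathbb F}$ from Proposition \ref{unboundedbsde}. The delicate terms involve $\Lambda$, which is only BMO: they are $\langle \Lambda X,X\rangle$ in the martingale part and $\langle\Lambda b,X\rangle$ in the drift. I will handle them by dominated convergence, using $\sup_s\|X\|^2\in L^1$ and the energy inequality for BMO martingales, or else by appealing to the corresponding estimates in Kohlmann and Tang \cite{kohlmann2003minimization}. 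With $X(T)$ and $H(T)=\lambda-G\xi$ inserted, this gives
\[
J_1(x,d,\lambda;u)=\langle P(0)x,x\rangle+2\langle H(0),x\rangle+\langle\bar G d,d\rangle-2\langle\zeta+\lambda,d\rangle+c+\mathbb E\int_0^T\langle\mathcal{R}(u-\Theta X-u_0),u-\Theta X-u_0\rangle\,ds .
\]
The terminal terms combine correctly because $\langle GX,X\rangle-2\langle G\xi,X\rangle+2\langle\lambda,X\rangle+\langle G\xi,\xi\rangle = \langle P(T)X,X\rangle+2\langle H(T),X\rangle+\langle G\xi,\xi\rangle$.

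\textbf{Step 3 (closed loop and optimality).} Since $\mathcal R\ge R\ge\delta I$, the integrand is nonnegative, and it vanishes if and only if $u=\Theta X+u_0$. Substituting this feedback into \eqref{staeq} gives the stated linear SDE for $\bar X$, but its coefficient $\Theta$ is unbounded, being linear in $\Lambda\in L^{2,\mathrm{bmo}}$. Well-posedness of this SDE in $L^{2,c}_{\mathbb F}$, together with $\bar u\in\mathcal U[0,T]$, is therefore the main obstacle.

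To overcome it, I first note that $u_0\in L^2_{\mathbb F}$ because $\mathcal R^{-1}$ is bounded. I would then follow Kohlmann and Tang \cite{kohlmann2003minimization}, Theorem 3.8, as the paper indicates: obtain existence and uniqueness via BMO/reverse-H\"older arguments for SDEs with BMO diffusion coefficients. Square-integrability of $\bar u$ then follows from a localized version of the identity in Step 2, which shows $\mathbb E\int_0^{\tau_k}\langle R\bar u,\bar u\rangle$ is bounded uniformly in $k$.

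Once this is in place, $J_1(x,d,\lambda;u)\ge J_1(x,d,\lambda;\bar u)$ for every admissible $u$, with equality exactly at $\bar u$. This yields the optimal feedback control and the value formula of Theorem \ref{valuefunction}. The constant $c$ is independent of $(x,d,\lambda)$ because its ingredients depend on $\lambda$ only through $(H,\Gamma)$; I would double-check this point. If the $\lambda$-dependence persists, the statement should read $c=c(\lambda)$.
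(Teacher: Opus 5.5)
Your completion-of-squares argument is essentially the proof the paper relies on when it cites Theorem~3.8 of Kohlmann and Tang, and your drift and terminal bookkeeping checks out. The uniform bound in Step~3 also goes through, but since $P$ is only nonnegative here, you must control $-2\mathbb{E}\langle H(\tau_k),\bar X(\tau_k)\rangle$ by Young's inequality plus the open-loop estimate $\mathbb{E}\|\bar X(\tau_k)\|^2\le C\bigl(1+\mathbb{E}\int_0^{\tau_k}\|\bar u\|^2\,\ds\bigr)$ (which holds because $A,B,C,D$ are bounded) and then absorb.

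On the point you hedge, drop the ``because'' clause. Your calculation actually shows that $c$ depends on $\lambda$ through $H$, $\Gamma$ and $u_0$: its term $-\langle (D^\top PD+R)u_0,u_0\rangle$ is exactly what later produces $-\langle\Psi\lambda,\lambda\rangle$. So only independence of $(x,d)$ holds, and the statement's claim of independence from $\lambda$ is incorrect as written.
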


Next, we aim to optimize the value function $V_1(x,d,\lambda)$ with respect to the multiplier $\lambda$ which appears in the terminal condition of BSDE \eqref{bsde2}. Since $H$ depends on $\lambda$, we need to express the relationship explicitly.
For this, we decompose the linear BSDE \eqref{bsde2} into the sum of a linear non-homogeneous BSDE:
\begin{equation*}
\left\{\begin{aligned}
& \dd H^0 = - g_2(H^0,\Gamma^0) \dt + \Gamma^0 \dw, \quad t \in[0, T], \\
& H^0(T)= -G\xi,
\end{aligned}\right.
\end{equation*}
and $n$ linear homogeneous BSDEs:
\begin{equation*}
\left\{\begin{aligned}
& \dd H^i = - \left(\hat{A}^{\top}H^i + \hat{C}^{\top}\Gamma^i \right) \dt + \Gamma^i \dw, \quad t \in[0, T], \quad i=1,2,\cdots,n,\\
& H^i(T)= e_i,
\end{aligned}\right.
\end{equation*}
where $e_i$ is a basis vector in $\mathbb{R}^n$ whose $i$-th component is $1$ and all other components are $0$.
Note all $(H^i,\Gamma^i)$, $0 \leq i \leq n$, are independent of $\lambda$ and $d$.

By the uniqueness of the solution to BSDE \eqref{bsde2}, it is not hard to verify $H = H^0 + \sum_{i=1}^{n} \lambda_i H^i $ and $\Gamma = \Gamma + \sum_{i=1}^{n} \lambda_i \Gamma^i$.
As a consequence, the value function $V_1(x,d,\lambda)$ can be reformulated as
\begin{equation*}
V_1(x,d,\lambda) = -\langle \Psi \lambda, \lambda \rangle + 2 \langle \psi - d, \lambda \rangle + \Delta + \langle \bar{G} d, d \rangle - \langle \zeta,d \rangle,
\end{equation*}
where $\Psi={[\Psi (i,j)]}_{i,j=1,2,\cdots,n} \in \R^{n \times n}$, $\psi=(\psi(1),\psi(2),\cdots,\psi(n))^{\top}\in \R^n$ and $\Delta\in \R$ are defined as
\begin{align*}
\Psi (i,j) =&~ \mathbb{E} \int_{0}^{T} \Big\langle \left(D^{\top} P D + R\right)^{-1} \left( B^{\top}H^j+D^{\top} \Gamma^j \right), B^{\top}H^i+D^{\top}\Gamma^i \Big\rangle \ds,\\
\psi(i) =&~ \langle H^i(0),x
\rangle + \mathbb{E} \int_{0}^{T} \left(\langle H^i, a \rangle + \langle \Gamma^i, b\rangle - \langle (D^{\top} P D + R)^{-1}(B^{\top}H^i+D^\top\Gamma^i), D^\top Pb \rangle \right) \ds,\\
\Delta =&~ \mathbb{E}\bigg\{ \langle G\xi, \xi \rangle + \int_{0}^{T}\Big[ \langle Qq,q\rangle + \langle Pb,b \rangle + 2\langle H^0, a \rangle + 2\langle \Gamma^0, b\rangle \\
&\qquad- \langle (D^{\top} P D + R)^{-1} D^\top Pb, D^\top Pb \rangle \Big]\ds \bigg\} + \langle P(0) x, x \rangle + 2 \langle H^0,x\rangle.
\end{align*}
Obviously, $\Psi$, $\psi$ and $\Delta$ are constants and independent of $(x,d,\lambda)$.

We now show the solvability of {\bf Problem \ref{MFLQ}} under the first type of sufficient condition.

\begin{theorem} \label{main1}
Under {\bf Assumptions \ref{A1}} and {\bf \ref{A2}}, the matrix $\Psi $ is positive definite. Moreover, {\bf Problem \ref{MFLQ}} admits a unique optimal pair if $\bar{G} + \Psi^{-1} > 0$.
\end{theorem}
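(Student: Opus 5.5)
The argument has two parts: positive definiteness of $\Psi$, then a finite-dimensional optimization over $(d,\lambda)$.

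\textbf{Step 1: $\Psi\ge 0$.} For $y\in\R^n$ put $H^y:=\sum_i y_iH^i$ and $\Gamma^y:=\sum_i y_i\Gamma^i$. By linearity, $(H^y,\Gamma^y)$ solves the homogeneous BSDE with terminal value $y$, and
\begin{equation*}
\langle \Psi y,y\rangle=\mathbb{E}\int_0^T\Big\langle (D^\top PD+R)^{-1}(B^\top H^y+D^\top\Gamma^y),\,B^\top H^y+D^\top\Gamma^y\Big\rangle \ds .
\end{equation*}
Since $D^\top PD+R\ge \delta I_m$ and is bounded above on the set where it matters (by boundedness of $P$, $D$, $R$), this quantity is nonnegative.

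\textbf{Step 2: $\Psi$ is definite.} Suppose $\langle \Psi y,y\rangle=0$. Then $B^\top H^y+D^\top\Gamma^y=0$ a.e. Using $\hat A=A+B\Theta$ and $\hat C=C+D\Theta$, we get
\begin{equation*}
\hat A^\top H^y+\hat C^\top\Gamma^y=A^\top H^y+C^\top\Gamma^y+\Theta^\top\big(B^\top H^y+D^\top \Gamma^y\big)=A^\top H^y+C^\top\Gamma^y .
\end{equation*}
So $(H^y,\Gamma^y)$ solves the adjoint BSDE without $\Theta$. Comparing with the matrix BSDE of Lemma \ref{feasible-condition}, uniqueness gives $H^y=Y^\top y$ and $\Gamma^y=Z^\top y$. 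Hence $y^\top(YB+ZD)=0$ a.e.

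Next apply It\^o's formula to $\langle H^y,X\rangle$ for any control $u$. This gives
\begin{equation*}
\langle y,\mathbb{E}X(T)\rangle=\langle H^y(0),x\rangle+\mathbb{E}\int_0^T\big(\langle H^y,a\rangle+\langle \Gamma^y,b\rangle\big)\ds,
\end{equation*}
because the $u$-terms $\langle B^\top H^y+D^\top\Gamma^y,u\rangle$ vanish. Thus $\langle y,\mathbb{E}X(T)\rangle$ does not depend on $u$. This contradicts the standing assumption \eqref{posi-definite}, which by Lemma \ref{feasible-condition} makes $\mathbb{E}X(T)$ range over all of $\R^n$, unless $y=0$. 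Hence $\Psi>0$. The main obstacle is this identification step: one must check that $Y^\top y$ lies in the uniqueness class of Proposition \ref{unboundedbsde} (bounded, with $Z$ square integrable), or else rerun the duality argument directly for $H^y$ with the BMO coefficients.

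\textbf{Step 3: optimization in $\lambda$.} The reformulation gives $V_1$ as a strictly concave quadratic in $\lambda$. Its supremum is attained at $\lambda^*=\Psi^{-1}(\psi-d)$, so
\begin{equation*}
V(x,d)=\sup_\lambda V_1(x,d,\lambda)=\langle\Psi^{-1}(\psi-d),\psi-d\rangle+\langle\bar G d,d\rangle-2\langle\zeta,d\rangle+\Delta .
\end{equation*}
Here I correct the paper's $-\langle\zeta,d\rangle$ to $-2\langle\zeta,d\rangle$ to be consistent with \eqref{J1def}; the constant in front is irrelevant to the argument. This equality uses the Lagrange duality theorem quoted in the bridge-problem discussion, which is justified by convexity of $J$ on the affine set $\mathcal{U}^d[0,T]$ (convex since $Q,R,G\ge0$) and surjectivity of $u\mapsto\mathbb{E}X(T)$. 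Moreover, the optimal control $\bar u$ of Problem \ref{slq} at $\lambda^*$ satisfies the constraint $\mathbb{E}\bar X(T)=d$. This follows from the first-order condition in $\lambda$, or equivalently from the It\^o identity above applied to each $H^i$, which yields $\mathbb{E}\bar X(T)=\psi-\Psi\lambda^*$. So $\bar u$ solves Problem \ref{constrained-SLQ}.

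\textbf{Step 4: optimization in $d$.} Finally, $d\mapsto V(x,d)$ is a quadratic with Hessian $2(\bar G+\Psi^{-1})>0$. It therefore has a unique minimizer
\begin{equation*}
d^*=(\bar G+\Psi^{-1})^{-1}(\Psi^{-1}\psi+\zeta).
\end{equation*}
We have $\inf_u J=\inf_d V(x,d)$, and the control built above for $d^*$ is optimal for Problem \ref{MFLQ}.

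\textbf{Uniqueness.} Any optimal $u$ has some terminal mean $d$, and optimality forces $d=d^*$ by strictness in $d$. For fixed $d^*$, the optimal control is unique because $R\ge\delta I_m$ makes $J$ strictly convex on $\mathcal{U}^{d^*}[0,T]$. Hence the optimal pair is unique.
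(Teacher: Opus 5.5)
Your proof is correct, but you obtain invertibility of $\Psi$ by a different route than the paper.

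\textbf{The paper's route.} It first invokes the abstract Lagrange duality theorem to get, for every $d\in\R^n$, a maximizer of $\lambda\mapsto V_1(x,d,\lambda)$. The first-order condition $\Psi\lambda^*=\psi-d$ then holds for all $d$, so $\Psi$ is onto and hence, being positive semidefinite, positive definite. Only after that does it write $\lambda^*=\Psi^{-1}(\psi-d)$ and minimize in $d$.

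\textbf{Your route.} You argue directly: a null vector $y$ of $\Psi$ forces $B^\top H^y+D^\top\Gamma^y=0$. Then $H^y$ solves the uncontrolled adjoint equation, and It\^o's formula makes $\langle y,\mathbb{E}X(T)\rangle$ independent of $u$. This contradicts nonemptiness of $\mathcal{U}^d[0,T]$ for every $d$. Since you know $\Psi>0$ in advance, you get strong duality by verification: weak duality plus feasibility $\mathbb{E}\bar X(T)=\psi-\Psi\lambda^*=d$ of the Lagrangian minimizer. You also prove uniqueness of the optimal pair, which the paper only asserts.

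\textbf{Comparison.} The paper's route is shorter, but it rests on dual attainment in the abstract theorem. That theorem's regularity hypothesis, essentially the same surjectivity of $u\mapsto\mathbb{E}X(T)$, is left implicit. Your argument is self-contained and shows exactly where feasibility enters. It also identifies $\ker\Psi$ with the directions in which the terminal mean cannot be steered.

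\textbf{Small comments.}
\begin{enumerate}
\item The identification $H^y=Y^\top y$, $\Gamma^y=Z^\top y$ that you call the main obstacle is unnecessary. The It\^o computation alone gives the contradiction.
\item The contradiction must come from surjectivity, not from \eqref{posi-definite} as literally written. For $n\geq2$, a relation $y^\top(YB+ZD)=0$ with $y\neq0$ is compatible with $\mathbb{E}\int_0^T\|ZD+YB\|\dt>0$.
\item Your identity $\mathbb{E}\bar X(T)=\psi-\Psi\lambda$ holds only if $\psi(i)$ includes the cross term $-\mathbb{E}\int_0^T\langle (D^\top PD+R)^{-1}(B^\top H^i+D^\top\Gamma^i),B^\top H^0+D^\top\Gamma^0\rangle\ds$. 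The paper's displayed formula for $\psi$ omits this term. With it, $\psi$ matches the true quadratic expansion of $V_1$, so your argument is unaffected.
\end{enumerate}
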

\begin{proof}
We first prove that $\Psi$ is positive semidefinite. Indeed, for any vector $z = (z_1, z_2,\cdots,z_n)^{\top}\in \R^n$, we have
\begin{equation*}
\begin{aligned}
z^{\top}\Psi z& = \sum_{i,j=1}^{n}\Psi(i,j)z_i z_j = \mathbb{E}\int_{0}^{T} \left( B^{\top}\hat{H}+D^{\top}\hat{\Gamma} \right)^{\top} \left(D^{\top} P D + R\right)^{-1} \left( B^{\top}\hat{H}+D^{\top} \hat{\Gamma} \right) \dt \geq 0,
\end{aligned}
\end{equation*}
where $(\hat{H}, \hat{\Gamma})$ solves BSDE
\begin{equation} \label{positive-condition}
\left\{\begin{aligned}
& \dd \hat{H} = - \left(\hat{A}^{\top}\hat{H} + \hat{C}^{\top}\hat{\Gamma} \right) \dt + \hat{\Gamma} \dw, \quad t \in[0, T], \\
& \hat{H}(T)= z.
\end{aligned}\right.
\end{equation}
Hence $\Psi$ is positive semidefinite.

According to the classical Lagrange duality theorem (see, e.g. Luenberger \cite{luenberger1997optimization}), for any $d \in \R^n$, the functional $J_1$ is strongly convex w.r.t $(x,u)$ under {\bf Assumptions \ref{A1}} and {\bf \ref{A2}}, so the following Lagrange duality holds on
\begin{equation*}
V(x,d) = \max _{\lambda \in \R^n} V_1(x,d,\lambda).
\end{equation*}
The gradient and Hessian of $V_1(x,d,\lambda)$ with respect to $\lambda$ are
\begin{equation*}
\nabla_{\lambda} V_1(x,d,\lambda) = -2\Psi\lambda + 2\psi - 2 d\ \ {\rm and}\ \ \text{Hess}(V_1(x,d,\lambda)) = -2\Psi.
\end{equation*}
Since the function $V_1$ has extremum points from the duality theorem, so at any extremum point $\lambda^*$, $\nabla_{\lambda} V_1(x,d,\lambda^*)=0$, that is,
\begin{equation*}
\Psi \lambda^* = \psi - d.
\end{equation*}
Since $d$ is arbitrarily chosen, it shows that the image of $\Psi$ is $ \R^n$, and thus $\Psi$ is nonsingular. Since $\Psi$ is positive semidefinite, we conclude $\Psi$ is positive definite.

Furthermore, the optimal Lagrange multiplier is uniquely given by
\begin{equation*}
\lambda^* = \Psi^{-1}\left(\psi -d\right).
\end{equation*}
As a consequence, the value function of the constrained SLQ {\bf Problem \ref{constrained-SLQ}} is
\begin{equation*}
V(x,d) = \max _{\lambda \in \R^n} V_1(x,d,\lambda) = V_1(x,d,\lambda^*) = \langle \Psi^{-1}\left(\psi -d\right), \psi -d \rangle + \Delta + \langle \bar{G} d, d \rangle - \langle \zeta,d \rangle .
\end{equation*}
Moreover, if $\bar{G} + \Psi^{-1} > 0$, then $V(x,d)$ is a positive definite functional of $d$ which admits a unique minimizer $d^*$, given by
\[d^*=\frac{1}{2}(\bar{G} + \Psi^{-1})^{-1}(\Psi^{-1}\psi +\zeta).\]
Consequently, we have the optimal Lagrange multiplier
\begin{equation*}
\lambda^* = \Psi^{-1}\left(\psi -d^*\right),
\end{equation*}
and its associated optimal state feedback control $\bar{u}$.
\end{proof}

The conclusions, as well as their proofs, of Theorems \ref{valuefunction} and \ref{main1} remain valid if \textbf{Assumption \ref{A2}} (the standard case) is replaced by \textbf{Assumption \ref{A3}} (the singular case). The key point in the proofs of Theorems \ref{valuefunction} and \ref{main1} under \textbf{Assumptions \ref{A1}} and \textbf{\ref{A3}} is the well-posedness of the SRE \eqref{bsde1}, which fortunately exists in Tang \cite{tang2003general} and \cite{tang2015dynamic}.

However, the explicit form of $\Psi^{-1}$ is not clear in most cases. Therefore, we provide an alternative method to study \textbf{Problem \ref{MFLQ}}, which offers a more intuitive view of our optimal control problem. Although the alternative method requires a structural condition --- namely, that the solution $P$ to the SRE \eqref{bsde1} must be positive rather than merely nonnegative --- the positivity of $P$ has already been established by Theorem \ref{SRE}.

We start from {\bf Problem \ref{SLQ}} and introduce a non-homogeneous linear BSDE with possibly unbounded coefficients
\begin{equation} \label{bsde3}
\left\{\begin{aligned}
& \dd K = - g_3 (K,\Xi) \dt + \Xi \dw, \quad t \in[0, T], \\
& K(T)= G^{-1} \lambda - \xi,
\end{aligned}\right.
\end{equation}
where
\begin{equation*}
\begin{aligned}
g_3 (K,\Xi) = \left( -A -P^{-1}Q - P^{-1} \mathscr{S} C \right)K +P^{-1}\mathscr{S}\Xi + \left(a + P^{-1}\mathscr{S} b -P^{-1}Qq \right)\ \ {\rm and}\ \ \mathscr{S} = \hat{C}^\top P+ \Lambda.
\end{aligned}
\end{equation*}

Indeed, the well-posedness of BSDE \eqref{bsde3} comes from the well-posedness of BSDE \eqref{bsde2}.
\begin{theorem} \label{transformation}
Under {\bf Assumptions \ref{A1}}, {\bf \ref{A2}} and {\bf \ref{A4}}
or {\bf Assumptions \ref{A1}} and {\bf\ref{A3}},
BSDE \eqref{bsde3} admits a unique solution $(K,\Xi)\in L_{\mathbb{F}}^{2,c}\left(0, T; \R^n\right) \times L_{\mathbb{F}}^{2}\left(0, T; \R^n\right)$.
\end{theorem}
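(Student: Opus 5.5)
The plan is to obtain BSDE \eqref{bsde3} from BSDE \eqref{bsde2} through the change of variables $K = P^{-1}H$. Proposition \ref{unboundedbsde} then gives existence, and the inverse change of variables gives uniqueness. The terminal values already match: $P(T)=G$ and $H(T)=\lambda - G\xi$ give $P(T)^{-1}H(T) = G^{-1}\lambda - \xi = K(T)$.

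First I need $P^{-1}$ to be an essentially bounded, continuous, adapted process. Under Assumptions \ref{A1}, \ref{A2} and \ref{A4} this is Theorem \ref{SRE}. Under Assumptions \ref{A1} and \ref{A3} it is the uniform positivity of $P$ due to Kohlmann and Tang \cite{kohlmann2003minimization}, recalled before Theorem \ref{SRE}. Itô's formula for the inverse gives
\[
\dd(P^{-1}) = -P^{-1}(\dd P)P^{-1} + P^{-1}\Lambda P^{-1}\Lambda P^{-1}\dt .
\]
Let $(H,\Gamma)$ be the solution from Proposition \ref{unboundedbsde}. Set
\[
K := P^{-1}H,\qquad \Xi := P^{-1}(\Gamma - \Lambda K).
\]
Apply Itô's product rule to $P^{-1}H$. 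The martingale part is read off directly as $\Xi$. The drift is then rewritten using $g_1$, $g_2$, $\Theta(P,\Lambda)$ and $\mathscr{S}=\hat C^\top P+\Lambda$. The aim is to show that it equals $-g_3(K,\Xi)$. This is the main algebraic step. The key substitution is $\Gamma = P\Xi + \Lambda K$, together with the identity $P\hat A + \hat A^\top P + \dots$ hidden in $g_1$. One uses $-g_1 P^{-1}$ to cancel the terms $\hat A^\top$ and $\hat C^\top$ against $A$, $C$, $Q$. I expect the terms $P^{-1}\mathscr{S}C K$ and $P^{-1}\mathscr{S}\Xi$ to appear exactly after this grouping.

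Next come the integrability claims. Since $P^{-1}$ is bounded and continuous and $H\in L^{2,c}_{\mathbb F}$, we get $K\in L^{2,c}_{\mathbb F}(0,T;\R^n)$ and $P^{-1}\Gamma\in L^2_{\mathbb F}$. The delicate term is $\Lambda K$, because $\Lambda$ is only in $L^{2,\mathrm{bmo}}$. I expect this to be the main obstacle, and I plan to handle it with a Garsia-type (energy) argument. Let $K^*_s := \sup_{r\le s}\|K(r)\|^2$ and $A_s := \int_0^s\|\Lambda\|^2\dr$. Then
\[
\mathbb{E}\int_0^T \|\Lambda\|^2\|K\|^2\ds \le \mathbb{E}\int_0^T K^*_s\,\dd A_s = \mathbb{E}\int_0^T \mathbb{E}\bigl[A_T-A_s\,\big|\,\mathcal F_s\bigr]\,\dd K^*_s \le \|\Lambda\|_{\mathrm{bmo}}^2\,\mathbb{E}\, K^*_T .
\]
The middle equality needs justification via optional projection (or a localization to bounded $K^*$ followed by a limit). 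The right-hand side is finite, so $\Xi\in L^2_{\mathbb F}(0,T;\R^n)$ and $(K,\Xi)$ is a solution in the stated spaces.

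Uniqueness works in reverse. Take any solution $(K,\Xi)$ of \eqref{bsde3} in $L^{2,c}_{\mathbb F}\times L^2_{\mathbb F}$ and set $H:=PK$, $\Gamma := P\Xi+\Lambda K$. The same BMO estimate shows $\Gamma\in L^2_{\mathbb F}$. The same Itô computation run backwards shows that $(H,\Gamma)$ solves \eqref{bsde2}. Proposition \ref{unboundedbsde} gives uniqueness of $(H,\Gamma)$, and invertibility of $P$ then gives uniqueness of $(K,\Xi)$.
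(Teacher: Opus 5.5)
Your proposal is correct and is essentially the paper's proof. Existence comes from $K=P^{-1}H$, $\Xi=P^{-1}(\Gamma-\Lambda K)$ applied to the solution of \eqref{bsde2}, and uniqueness from the inverse map $H=PK$, $\Gamma=P\Xi+\Lambda K$; the drift computation you deferred does close, since $g_1-\hat{A}^{\top}P-\hat{C}^{\top}\Lambda=PA+\mathscr{S}C+Q$, which yields exactly $-g_3(K,\Xi)$. Your BMO energy estimate also supplies the square-integrability of $\Lambda K$, which the paper asserts without comment; note only that the integration by parts leaves an extra term $\mathbb{E}[\|K(0)\|^2A_T]\le\|\Lambda\|_{\mathrm{bmo}}^2\|K(0)\|^2$, and this is absorbed into the same final bound $\|\Lambda\|_{\mathrm{bmo}}^2\,\mathbb{E}\,K^*_T$.
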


\begin{proof}
Assume {\bf Assumptions \ref{A1}}, {\bf \ref{A2}} and {\bf \ref{A4}} hold.
By the It\^{o} formula for matrix inverse, the dynamics equation for $\hat{P}:=P^{-1}$ is
$$
\left\{
\begin{aligned}
& \dd \hat{P} = \left( \hat{P} g_1(P,\Lambda) \hat{P} + \hat{P} \Lambda \hat{P} \Lambda \hat{P} \right) \dd t - \hat{P} \Lambda \hat{P} \dd W, \quad t \in [0,T], \\
& \hat{P}(T) = G^{-1}.
\end{aligned}
\right.
$$
By Proposition \ref{unboundedbsde}, BSDE \eqref{bsde2} admits a unique adapted solution $(H, \Gamma) \in L_{\mathbb{F}}^{2,c}(0,T; \R^n) \times L_{\mathbb{F}}^{2}(0,T; \R^n)$.
Define
$$
{K} = \hat {P} H\ \ {\rm and}\ \ {\Xi} = \hat{P} \big( \Gamma - \Lambda \hat{P} H \big).
$$
Applying the It\^{o} formula again, we know that $({K}, {\Xi}) \in L_{\mathbb{F}}^{2,c}(0,T; \R^n) \times L_{\mathbb{F}}^{2}(0,T; \R^n)$ solves BSDE \eqref{bsde3}, and the existence of BSDE \eqref{bsde3} follows.

If there is another solution $(\tilde{K}, \tilde{\Xi}) \in L_{\mathbb{F}}^{2,c}(0,T; \R^n) \times L_{\mathbb{F}}^{2}(0,T; \R^n)$, set
$$
H = P \tilde{K}, \quad \Gamma = \Lambda \tilde{K} + P \tilde{\Xi},
$$
which gives another solution to BSDE \eqref{bsde2}. Then the uniqueness of solution to BSDE \eqref{bsde3} follows from the uniqueness of solution to BSDE \eqref{bsde2}.

The proof is similar when {\bf Assumptions \ref{A1}} and {\bf\ref{A3}} hold.
\end{proof}


Similar to the Theorem \ref{valuefunction}, we can obtain the optimal feedback control and value function of {\bf Problem \ref{SLQ}} based on BSDEs \eqref{bsde1} and \eqref{bsde3} in the following theorem, referring to the proof of Lemma 4.2 in Hu, Shi and Xu \cite{MR4729330}.
\begin{theorem} \label{extension}
Under {\bf Assumptions \ref{A1}}, {\bf \ref{A2}}, and {\bf \ref{A4}} or {\bf Assumptions \ref{A1}} and {\bf\ref{A3}}, the linear SDE
\begin{equation*}\label{sxz2}
\left\{\begin{aligned}
\dd \tilde{X}(s) =&~\big\{ \hat{A}(s) \tilde{X}(s) + B(s) u_1(s) \big\} \ds
+ \big\{ \hat{C}(s) \tilde{X}(s) + D(s) u_1(s) \big\} \, \dd W(s), & s \in [0,T], \\
\tilde{X}(0) =&~x,
\end{aligned}\right.
\end{equation*}
where
$$
u_1 := -\left(D^{\top} P D + R\right)^{-1} \big( B^\top P K + D^\top \Lambda K + D^\top P \Xi \big),
$$
admits the unique solution $\tilde{X}$. Moreover,
the optimal feedback control of {\bf Problem \ref{SLQ}} is
$$
\tilde{u} = \Theta(P,\Lambda) \tilde{X} + u_1,
$$
and the value function of {\bf Problem \ref{SLQ}} is
$$
V_2(x,\lambda) = J_2\big(x,\lambda ; \bar{u}\big) = \langle P(0) (x + K(0)), x + K(0) \rangle + \mathbb{E}\Big[ \int_{0}^{T} l(t) \, \dd t \Big],
$$
where
$$
l = \langle \mathscr{R} (-C K + \Xi + b), -C K + \Xi + b \rangle + \langle Q (K+q), K+q \rangle\ \ {\rm and}\ \ \mathscr{R} = P - P D \left(D^{\top} P D + R\right)^{-1} D^{\top} P.
$$
\end{theorem}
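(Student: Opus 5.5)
The proof will be a change of variables from the $(H,\Gamma)$ representation to the $(K,\Xi)$ representation. We reduce to Theorem \ref{valuefunction} and the transformation already used in Theorem \ref{transformation}. Problem \ref{SLQ} is Problem \ref{slq} up to an additive constant, so the two share the same optimal control; for the state feedback we only need to rewrite $u_0$ in terms of $(K,\Xi)$.

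By Theorem \ref{transformation} we have $K=P^{-1}H$ and $\Xi=P^{-1}(\Gamma-\Lambda P^{-1}H)$, where $(H,\Gamma)$ solves \eqref{bsde2} with terminal value $\lambda-G\xi = G(G^{-1}\lambda-\xi)$, i.e. $H(T)=P(T)K(T)$. Equivalently, $H=PK$ and $\Gamma=\Lambda K+P\Xi$. We will substitute these into
\[u_0=-(D^\top PD+R)^{-1}(B^\top H+D^\top\Gamma+D^\top Pb)\]
to obtain $-(D^\top PD+R)^{-1}(B^\top PK+D^\top\Lambda K+D^\top P\Xi+D^\top Pb)$. The $D^\top Pb$ term is absent from the stated $u_1$. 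We will therefore check whether it is absorbed (e.g.\ because $g_3$ carries $b$ through $P^{-1}\mathscr S b$), or, more likely, whether the statement intends $u_1$ together with that term. The cleanest way to settle this is to verify optimality directly rather than by substitution.

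For the direct verification, take any $u\in\mathcal U[0,T]$ with state $X$. Set $Y:=X+K$ and apply Itô's formula to $\langle PY,Y\rangle$ on $[0,T]$. We compute the drift and diffusion of $Y$ from \eqref{staeq} and \eqref{bsde3}, then use \eqref{bsde1} for $P$. At $T$, $\langle P(T)Y(T),Y(T)\rangle=\langle G(X(T)-\xi+G^{-1}\lambda),\cdot\rangle$, which is exactly the terminal cost of $J_2$. Completing the square in $u$ with weight $D^\top PD+R$, which is uniformly positive under Assumption \ref{A2} (or under \ref{A3}, via $D^\top D>0$ and $P>0$), should give
\[J_2(x,\lambda;u)=\langle P(0)(x+K(0)),x+K(0)\rangle+\mathbb E\int_0^T l\,\dt+\mathbb E\int_0^T\big\langle (D^\top PD+R)(u-\Theta X-u_1),\,u-\Theta X-u_1\big\rangle\dt.\]
Here the choice of $g_3$ is exactly what cancels the terms linear in $Y$. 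The leftover zeroth-order terms, after using $Q(X-q)=Q(Y-(K+q))$ and the structure of the diffusion coefficient $CX+Du+b=C Y+Du+(-CK+\Xi+b)$, assemble into $l$ with the Schur complement $\mathscr R=P-PD(D^\top PD+R)^{-1}D^\top P$. The minimum is then attained uniquely at $\tilde u=\Theta\tilde X+u_1$, and the closed-loop SDE for $\tilde X$ is linear with coefficients $\hat A,\hat C$ and forcing $Bu_1,Du_1$.

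The main obstacle is integrability. $\Lambda$ is only BMO and $K,\Xi$ are only $L^2$, so $\hat A,\hat C,u_1$ are unbounded. To show that the closed-loop SDE has a unique $L^{2,c}$ solution and that $\tilde u\in\mathcal U[0,T]$, we plan to argue as in Theorem \ref{valuefunction} (Kohlmann–Tang): identify $\tilde X$ with $\bar X$, since $u_1$ coincides with $u_0$ under $H=PK$, $\Gamma=\Lambda K+P\Xi$. The local-martingale terms in the Itô expansion will be handled by localization with stopping times $\tau_k\uparrow T$, passing to the limit by dominated or monotone convergence using $P\in L^\infty$ and the $L^{2,c}$ bounds on $X$ and $K$.
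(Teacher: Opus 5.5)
Your route, Itô's formula for $\langle P(X+K),X+K\rangle$ followed by completing the square in $u$, is the one the paper invokes through Hu--Shi--Xu, Lemma 4.2. It does yield the stated value function. The gap is this: you flagged the missing $D^\top Pb$ term but did not resolve it. You then wrote down an identity that is false whenever $D^\top Pb\neq0$, and ended by claiming $u_1=u_0$, which contradicts your own substitution.

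Carry out the computation you describe. Set $N:=D^\top PD+R$, $L:=B^\top P+D^\top\Lambda+D^\top PC$ (so $\Theta=-N^{-1}L$) and $w:=-CK+\Xi+b$. Then $Y=X+K$ has drift $AY+Bu+P^{-1}Q(K+q)-P^{-1}\mathscr{S}w$ and diffusion $CY+Du+w$. Drift of $\langle PY,Y\rangle$ plus running cost equals
\[
\langle Nu,u\rangle+2\langle u,LY+D^\top Pw\rangle+\langle L^\top N^{-1}LY,Y\rangle-2\langle \Theta^\top D^\top Pw,Y\rangle+\langle Pw,w\rangle+\langle Q(K+q),K+q\rangle .
\]
The term $P^{-1}\mathscr{S}b$ in $g_3$ only makes the $Y$-linear term come out as $-2\langle\Theta^\top D^\top Pw,Y\rangle$, which the square completion then cancels. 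It does not remove $b$ from the coefficient of $u$. So the remainder is exactly $l$, but the square is centred at
\[
\Theta X-N^{-1}D^\top Pw=\Theta X-N^{-1}\bigl(B^\top PK+D^\top\Lambda K+D^\top P\Xi+D^\top Pb\bigr)=\Theta X+u_1-N^{-1}D^\top Pb .
\]
This is $\Theta X+u_0$ once $H=PK$ and $\Gamma=\Lambda K+P\Xi$. The stated $u_1$, and the paper's remark that $u_1=u_0$, are correct only when $D^\top Pb=0$. You should commit to $u_1-N^{-1}D^\top Pb$.

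You also copied a second slip from the statement. The state driven by a feedback $u=\Theta X+v$ solves $\dd X=(\hat AX+Bv+a)\ds+(\hat CX+Dv+b)\dw$. So the closed-loop SDE must carry the forcing terms $a$ and $b$; the SDE displayed in Theorem~\ref{valuefunction} has the same omission. With forcing only $Bu_1$ and $Du_1$, the process $\tilde X$ is not the state generated by $\tilde u$ unless $a=b=0$, so optimality of $\tilde u$ would not follow from your identity.

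With both corrections, or when $a=b=0$ as in Section~\ref{numerical}, the rest of your plan works. The localization argument and the identification of $\tilde X$ with the closed-loop state of Theorem~\ref{valuefunction} go through. The value formula $\langle P(0)(x+K(0)),x+K(0)\rangle+\mathbb{E}\int_0^T l\,\dt$ then holds as stated.
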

\begin{remark}
Theorems \ref{transformation} and \ref{extension} can be regarded as a generalization of the result in Section 5.3 of Kohlmann and Tang \cite{kohlmann2002global} to the multi-dimensional case.
\end{remark}

Theorem \ref{extension} explicitly characterizes both the optimal feedback control and the value function in terms of the solutions $(P, \Lambda)$ and $(K, \Xi)$ to the associated BSDEs \eqref{bsde1} and \eqref{bsde3}. Indeed, by the relationships
$$
H = P K \ \ {\rm and}\ \ \Gamma = \Lambda K + P \Xi,
$$
we can directly verify that the dynamics of $\tilde{X}$ in Theorem \ref{valuefunction} coincide with those of $\bar{X}$ in Theorem \ref{extension}, and consequently the controls satisfy $u_1 = u_0$. It also indicates that the optimal feedback controls $\tilde{u} = \Theta(P, \Lambda) \tilde{X} + u_1 = \bar{u}$ and the equivalence between {\bf Problems \ref{slq}} and {\bf \ref{SLQ}}.

Moreover, we can separate the dependence on the Lagrange multiplier $\lambda$ from the non-homogeneous part by decomposing BSDE \eqref{bsde3} into
\begin{equation}
\left\{\begin{aligned}
& \dd K^0 = - g_3\left( K^0,\Xi^0 \right) \dt + \Xi^0 \dw, \quad t \in[0, T], \\
& K^0(T)= -\xi,
\end{aligned}\right.
\end{equation}
and
\begin{equation}
\left\{\begin{aligned}
& \dd K^i = - \left( \left( -A -P^{-1}Q - P^{-1} \mathscr{S} C \right)K +P^{-1}\mathscr{S}\Xi \right) \dt + \Xi^i \dw, \quad t \in[0, T], \\
& K^i(T)= G^{-1} e_i.
\end{aligned}\right.
\end{equation}
Obviously, $K = K^0 + \sum_{i=1}^{n} \lambda_i K^i $ and $\Xi = \Xi^0 + \sum_{i=1}^{n} \lambda_i \Xi^i$. Consequently, the value function $V_1(x,d,\lambda)$ can be written in the quadratic form
\begin{equation*}
V_1(x,d,\lambda) = \langle (\Phi-\mathbb{E} [G^{-1}]) \lambda,\lambda \rangle + 2 \langle \phi-d, \lambda \rangle + \delta +\langle \bar{G} d, d \rangle - \langle \zeta,d \rangle,
\end{equation*}
where $\Phi={[\Phi (i,j)]}_{i,j=1,2,\cdots,n} \in \R^{n \times n}$, $\phi=(\phi(1),\phi(2),\cdots,\phi(n))^{\top}\in \R^n$ and $\delta\in \R$ are defined as
\begin{align}
\Phi(i,j) =&~ \langle P(0)K^i(0),K^j(0) \rangle + \mathbb{E}\left[ \int_{0}^{T} \langle \mathscr{R} ( -C K^i + \Xi^i), - C K^j + \Xi^j \rangle + \langle QK^i,K^j \rangle \dt \right], \label{defPhi}\\
\phi(i) = &~\langle P(0)(x+K^0(0)),K^i(0) \rangle \nn\\
&~+ \mathbb{E}\left[ \int_{0}^{T} \langle \mathscr{R} ( -C K^0 + \Xi^0+b), - C K^i + \Xi^i \rangle + \langle Q(K^0+q),K^i \rangle \dt \right] + \mathbb{E}[ \xi](i), \nn\\
\delta =&~ \langle P(0)(x+K^0(0)),x+K^0(0) \rangle \nn\\
&~+ \mathbb{E}\left[ \int_{0}^{T} \langle \mathscr{R} ( -C K^0 + \Xi^0+b), - C K^0 + \Xi^0+b \rangle + \langle Q(K^0+q),K^0+q \rangle \dt \right].\nn
\end{align}
Here $\Phi$, $\phi$ and $\delta$ are determined by the solutions $(K^0, \Xi^0)$ and $(K^i, \Xi^i)$, $i=1,2,\cdots,n$. This decomposition gives a chance to straightforward optimize $V_1(x,d,\lambda)$ with respect to $\lambda$ since the quadratic structure guarantees a unique maximum.

\begin{remark}
The identity
$$
z^\top \left( P - P D (D^{\top} P D + R)^{-1} D^{\top} P \right) z = \inf_{v \in \R^m} \left\{ (z - D v)^\top P (z - D v) + v^\top R v \right\} \ge 0,
$$
indicates that $\mathscr{R} = P - P D (D^\top P D + R)^{-1} D^\top P$ is positive semidefinite. Hence $l\ge0$ as a sum of nonnegative terms.
\end{remark}

We now show the solvability of {\bf Problem \ref{MFLQ}} under the second type of sufficient condition.

\begin{theorem} \label{main2}
Under {\bf Assumptions \ref{A1}}, {\bf\ref{A2}}, and {\bf\ref{A4}} or {\bf Assumptions \ref{A1}} and {\bf\ref{A3}}, we have $\Phi - \mathbb{E}[G^{-1}]<0$. Moreover, {\bf Problem \ref{MFLQ}} is solvable if $\bar{G} - (\Phi - \mathbb{E}[G^{-1}])^{-1} > 0$.
\end{theorem}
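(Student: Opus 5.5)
The plan mirrors the proof of Theorem \ref{main1}, with the quadratic form in $\lambda$ now written through $(K,\Xi)$ as in \eqref{defPhi}. There are two observations. First, the Hessian of $V_1(x,d,\cdot)$ is $2(\Phi-\mathbb{E}[G^{-1}])$. Second, the two representations of $V_1$ (via $\Psi$ and via $\Phi$) describe the same function of $\lambda$ for every $\lambda$, by \eqref{equality} together with Theorems \ref{valuefunction} and \ref{extension} and the identities $H=PK$, $\Gamma=\Lambda K+P\Xi$. Comparing the quadratic coefficients therefore gives
\[
\Phi-\mathbb{E}[G^{-1}]=-\Psi .
\]
By Theorem \ref{main1}, $\Psi$ is positive definite, so $\Phi-\mathbb{E}[G^{-1}]<0$. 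Comparing the linear coefficients likewise gives $\phi=\psi$.

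If I want a proof that does not pass through $\Psi$, I would argue negative semidefiniteness directly. I would write $\langle(\Phi-\mathbb{E}[G^{-1}])z,z\rangle$ as $V_2$ evaluated along the homogeneous part with $x=0$, $\xi=q=b=a=0$, minus $\mathbb{E}\langle G^{-1}z,z\rangle$. I would then check that this equals $\inf_u \mathbb{E}\{\int(\langle QX,X\rangle+\langle Ru,u\rangle)\,ds+\langle GX(T),X(T)\rangle+2\langle z,X(T)\rangle\}$, started from $0$. Taking $u=0$ gives $X\equiv0$, so this infimum is $\le 0$. For nonsingularity I would reuse the argument of Theorem \ref{main1}: by Lemma \ref{feasible-condition} and the duality $V(x,d)=\max_\lambda V_1$, a maximizer exists for every $d$. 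The first-order condition $(\Phi-\mathbb{E}[G^{-1}])\lambda^*=d-\phi$ then shows the matrix is surjective, hence invertible, hence negative definite. This covers both the Assumptions \ref{A2}+\ref{A4} case and the Assumption \ref{A3} case, since Theorems \ref{transformation} and \ref{extension} hold in both.

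For solvability, I would maximize the concave quadratic in $\lambda$. This gives $\lambda^*=-(\Phi-\mathbb{E}[G^{-1}])^{-1}(\phi-d)$ and
\[
V(x,d)=-\langle(\Phi-\mathbb{E}[G^{-1}])^{-1}(\phi-d),\phi-d\rangle+\delta+\langle\bar G d,d\rangle-\langle\zeta,d\rangle .
\]
The quadratic part in $d$ is $\bar G-(\Phi-\mathbb{E}[G^{-1}])^{-1}$, so under the stated hypothesis $V(x,\cdot)$ is strictly convex and coercive. It therefore has a unique minimizer $d^*$, found from the linear first-order equation. The optimal control is then the feedback $\tilde u$ of Theorem \ref{extension} with $\lambda=\lambda^*(d^*)$. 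One must check that this control lies in $\mathcal U^{d^*}[0,T]$; this follows from the first-order condition in $\lambda$, i.e.\ the saddle-point/complementary slackness part of Lagrange duality. Finally, $\inf_u J=\inf_d V(x,d)$, which shows that Problem \ref{MFLQ} is solvable.

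The main obstacle is rigorously identifying the $\lambda$-quadratic coefficient. The $\Phi$-representation involves $\mathscr R$, $P(0)$ and the $K^i$, and matching it with $-\Psi$ (or showing negativity directly) requires careful bookkeeping of the constant terms in \eqref{equality}. I would handle this by the variational characterization above rather than by explicit algebra.
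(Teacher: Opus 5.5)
Your proposal is correct. The solvability half is the paper's own argument: the gradient in $\lambda$, $\lambda^*=-(\Phi-\mathbb{E}[G^{-1}])^{-1}(\phi-d)$, the formula for $V(x,d)$, and minimization over $d$. You are more careful than the paper in checking, via the first-order condition in $\lambda$, that the feedback control at $\lambda^*(d^*)$ really lies in $\mathcal{U}^{d^*}[0,T]$.

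The difference is in the first claim. The paper only says the proof is ``similar to Theorem~\ref{main1}'', i.e.\ semidefiniteness plus surjectivity from duality. It never supplies the semidefiniteness step, and that step does not follow from \eqref{defPhi}, which only yields $\Phi\geq 0$. Your second route fills precisely this gap. With zero data and $x=0$, $\langle(\Phi-\mathbb{E}[G^{-1}])z,z\rangle$ is the value of a minimization whose cost at $u=0$ vanishes; this is the same fact as concavity of $V_1(x,d,\cdot)$, an infimum of functions affine in $\lambda$. The surjectivity argument of Theorem~\ref{main1} then applies verbatim.

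Your first route is shorter and more informative. Both expansions describe the same function $V_1(x,d,\cdot)$, so their symmetric quadratic coefficients agree. Hence $\Phi-\mathbb{E}[G^{-1}]=-\Psi$, and the sign claim, invertibility included, is a corollary of Theorem~\ref{main1} in both assumption regimes. It also shows $\bar G-(\Phi-\mathbb{E}[G^{-1}])^{-1}=\bar G+\Psi^{-1}$. So the hypothesis of Theorem~\ref{main2} is literally that of Theorem~\ref{main1} in other coordinates. What the $\Phi$-form buys is the explicit comparison with $\mathbb{E}[G^{-1}]$ exploited in Proposition~\ref{condition2positive}.

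Your side remark $\phi=\psi$ is not needed. It also holds only for the correctly computed linear coefficient: the paper's printed $\psi$ omits the cross terms with $B^{\top}H^0+D^{\top}\Gamma^0$.
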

\begin{proof}
The proof is similar to Theorem \ref{main1}.
Assume {\bf Assumptions \ref{A1}}, {\bf \ref{A2}} and {\bf \ref{A4}} hold. The gradient of $V_1(x,d,\lambda)$ is
\begin{equation*}
\nabla_{\lambda} V_1(x,d,\lambda) = 2(\Phi-\mathbb{E} [G^{-1}]) \lambda + 2(\phi-d),
\end{equation*}
which yields the optimal Lagrange multiplier
\begin{equation*}
\lambda^* = - ({\Phi-\mathbb{E} [G^{-1}]}) ^{-1}(\phi-d).
\end{equation*}
As a result, the value function of the constrained SLQ {\bf Problem \ref{constrained-SLQ}} is
\begin{equation*}
V(x,d) = \max _{\lambda \in \R^n} V_1(x,d,\lambda) = V_1(x,d,\lambda^*) = - \langle ({\Phi-\mathbb{E} [G^{-1}]}) ^{-1}(\phi-d),\phi-d \rangle + \delta + \langle \bar{G} d, d \rangle - \langle \zeta,d \rangle.
\end{equation*}
Moreover, with the condition $\bar{G} - (\Phi - \mathbb{E}[G^{-1}])^{-1} > 0$, an optimal $d^*$ exists, from which the corresponding $\lambda^*$ and optimal control $\bar{u}$ can be determined.

The proof is similar when {\bf Assumptions \ref{A1}} and {\bf\ref{A3}} hold.
\end{proof}



Until now we have established two types of sufficient conditions given in Theorem \ref{main1} and Theorem \ref{main2} to ensure the solvability of {\bf Problem \ref{MFLQ}}.

We next provide three specific sufficient conditions that can guarantee the second type sufficient condition $\bar{G} - (\Phi - \mathbb{E}[G^{-1}])^{-1} > 0$ in Theorem \ref{main2} holds so that {\bf Problem \ref{MFLQ}} is solvable.

\begin{proposition}\label{condition2positive}
Under {\bf Assumptions \ref{A1}}, {\bf\ref{A2}}, and {\bf\ref{A4}} or {\bf Assumptions \ref{A1}} and {\bf\ref{A3}},
it holds that $\bar{G} - (\Phi - \mathbb{E}[G^{-1}])^{-1} > 0$ and thus {\bf Problem \ref{MFLQ}} is solvable under each of the following conditions.
\begin{enumerate}
\item $\bar{G} + \mathbb{E}[G^{-1}]^{-1}> 0$;
\item $\Phi>0$ and $\bar{G} + \mathbb{E}[G^{-1}]^{-1}\geq 0$;
\item $\Phi>0$, $G$ is deterministic and
\begin{equation*}
\bar{G}> - G^{\frac{1}{2}} \left(I_{n} - G^{\frac{1}{2}} \Phi G^{\frac{1}{2}} \right)^{-1} G^{\frac{1}{2}}.
\end{equation*}

\end{enumerate}
\end{proposition}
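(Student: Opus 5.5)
The plan is to reduce all three conditions to one matrix inequality. Set $M:=\mathbb{E}[G^{-1}]-\Phi$. By Theorem \ref{main2}, $\Phi-\mathbb{E}[G^{-1}]<0$, so $M$ is positive definite and invertible. The target condition $\bar{G}-(\Phi-\mathbb{E}[G^{-1}])^{-1}>0$ is then exactly $\bar{G}+M^{-1}>0$. Once this holds, solvability of \textbf{Problem \ref{MFLQ}} follows from Theorem \ref{main2}. It therefore suffices to compare $M^{-1}$ with $\mathbb{E}[G^{-1}]^{-1}$ in cases 1 and 2, and to compute $M^{-1}$ explicitly in case 3.

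\emph{Step 1: $\Phi\ge 0$.} For $z\in\R^n$ let $K_z:=\sum_i z_iK^i$ and $\Xi_z:=\sum_i z_i\Xi^i$. By bilinearity of \eqref{defPhi},
\[
z^\top\Phi z=\langle P(0)K_z(0),K_z(0)\rangle+\mathbb{E}\int_0^T\Big[\langle\mathscr{R}(-CK_z+\Xi_z),-CK_z+\Xi_z\rangle+\langle QK_z,K_z\rangle\Big]\dt .
\]
Each term is nonnegative, because $P(0)\ge0$ (indeed $P>0$ by Theorem \ref{SRE}, or by the singular-case results), $\mathscr{R}\ge0$ by the remark preceding Theorem \ref{main2}, and $Q\ge0$. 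Hence $M\le\mathbb{E}[G^{-1}]$, and $\mathbb{E}[G^{-1}]$ is positive definite under \textbf{Assumption \ref{A4}} (respectively \textbf{\ref{A3}}).

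\emph{Step 2: cases 1 and 2.} Matrix inversion is order-reversing on positive definite matrices: if $0<M\le N$ then $M^{-1}\ge N^{-1}$. One can see this by writing $N^{-1/2}MN^{-1/2}\le I$ and inverting. Applying this with $N=\mathbb{E}[G^{-1}]$ gives $M^{-1}\ge\mathbb{E}[G^{-1}]^{-1}$, so $\bar{G}+M^{-1}\ge\bar{G}+\mathbb{E}[G^{-1}]^{-1}>0$, which settles case 1.

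For case 2 a strict gain is needed. If $\Phi\ge\varepsilon I_n$, then $0<M\le N-\varepsilon I_n<N$, so $M^{-1}\ge(N-\varepsilon I_n)^{-1}$. The eigenvalues of $(N-\varepsilon I_n)^{-1}-N^{-1}$ are $\frac{\varepsilon}{\mu(\mu-\varepsilon)}$ for eigenvalues $\mu$ of $N$, and these are bounded below by a positive constant. Hence $M^{-1}-N^{-1}>0$ uniformly, and $\bar{G}+M^{-1}>\bar{G}+\mathbb{E}[G^{-1}]^{-1}\ge0$.

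\emph{Step 3: case 3.} When $G$ is deterministic,
\[
M=G^{-1}-\Phi=G^{-\frac12}\big(I_n-G^{\frac12}\Phi G^{\frac12}\big)G^{-\frac12}.
\]
Since $M>0$, the middle factor $I_n-G^{\frac12}\Phi G^{\frac12}$ is positive definite and in particular invertible. Then
\[
M^{-1}=G^{\frac12}\big(I_n-G^{\frac12}\Phi G^{\frac12}\big)^{-1}G^{\frac12},
\]
and the hypothesis in case 3 is literally $\bar{G}+M^{-1}>0$. This case is only a reformulation; the extra assumption $\Phi>0$ just places it in the regime where $\bar{G}+G$ may be slightly negative.

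The only real obstacle I anticipate is making the inequalities uniform, that is, strict in the sense of ``$>$'' with a positive $\delta$. This is handled by the explicit eigenvalue bound in case 2 and is automatic in the other cases because all matrices involved are constant. The positive semidefiniteness of $\Phi$ must also be checked under \textbf{Assumption \ref{A3}}; there $P\ge0$ and $\mathscr{R}\ge0$ still hold, so the argument is the same.
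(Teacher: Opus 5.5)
Your proposal is correct and follows essentially the paper's argument. The paper likewise sets $\hat G=\mathbb{E}[G^{-1}]$ and conjugates $\Phi-\hat G<0$ by $\hat G^{1/2}$. It then inverts $I_n\ge I_n-\hat G^{-1/2}\Phi\hat G^{-1/2}>0$, which is exactly your order-reversal of inversion applied to $M\le N$. It declares all inequalities strict when $\Phi>0$, and it reduces case 3 to the same algebraic identity for $(\Phi-G^{-1})^{-1}$ that you wrote down.

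Your explicit check that $\Phi\ge0$ from \eqref{defPhi} fills in a step the paper uses silently. The paper relies on it in the step $I_n\ge I_n-\hat G^{-1/2}\Phi\hat G^{-1/2}$. Likewise, your eigenvalue bound $\varepsilon/(\mu(\mu-\varepsilon))$ for case 2 makes precise the paper's brief ``all the inequalities are strict.''
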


\begin{proof}
Write $\hat{G}:= \mathbb{E}[G^{-1}]>0$, then by Theorem \ref{main2}, we have
\begin{equation*}
	\hat{G}^{\frac{1}{2}} \left[ \hat{G}^{-\frac{1}{2}} \Phi \hat{G}^{-\frac{1}{2}} - I_{n} \right]\hat{G}^{\frac{1}{2}} = \Phi - \mathbb{E}[G^{-1}] < 0,
\end{equation*}
which implies
\begin{equation*}
	I_{n} \geq I_{n}- \hat{G}^{-\frac{1}{2}} \Phi \hat{G}^{-\frac{1}{2}}>0,
\end{equation*}
and
\begin{equation*}
	\left(I_{n}- \hat{G}^{-\frac{1}{2}} \Phi \hat{G}^{-\frac{1}{2}} \right)^{-1} \geq I_{n}.
\end{equation*}
Hence
\begin{align}\label{est1}
-\left( \Phi- \mathbb{E}[G^{-1}]\right)^{-1} - \mathbb{E}[G^{-1}]^{-1} = \hat{G}^{-\frac{1}{2}} \left[ \left(I_{n}- \hat{G}^{-\frac{1}{2}} \Phi \hat{G}^{-\frac{1}{2}} \right)^{-1}-I_{n} \right]\hat{G}^{-\frac{1}{2}} \geq 0.
\end{align}
This implies $\bar{G} - (\Phi - \mathbb{E}[G^{-1}])^{-1} > 0$ when $\bar{G} + \mathbb{E}[G^{-1}]^{-1}> 0$.

When $\Phi>0$, all the inequalities in the preceding argument are strict. Therefore, we have $\bar{G} - (\Phi - \mathbb{E}[G^{-1}])^{-1} > 0$ holds when $\bar{G} + \mathbb{E}[G^{-1}]^{-1}\geq 0$. Under the last condition, we have $\mathbb{E}[G^{-1}]^{-1}=G$
and $\hat{G}^{-\frac{1}{2}}=G^{\frac{1}{2}}$,
so
\begin{align*}
\bar{G}- (\Phi - \mathbb{E}[G^{-1}])^{-1} &>
- G^{\frac{1}{2}} \left(I_{n} - G^{\frac{1}{2}} \Phi G^{\frac{1}{2}} \right)^{-1} G^{\frac{1}{2}}-(\Phi - \mathbb{E}[G^{-1}])^{-1}\\
&= - G^{\frac{1}{2}} \left[ \left(I_{n} - G^{\frac{1}{2}} \Phi G^{\frac{1}{2}} \right)^{-1} - I_{n} \right] G^{\frac{1}{2}}-G- (\Phi - \mathbb{E}[G^{-1}])^{-1}=0,
\end{align*}
where the last equation is due to \eqref{est1}.
\end{proof}

\begin{remark}
In the literature, it is often assumed that the coefficients of the problem are all deterministic (which implies $\Phi > 0$ by Proposition \ref{Phi>0} below) and that $\bar{G} + G \geq 0$ (see, e.g., Sun \cite{su} and Yong \cite{yong2013linear,yo2}).
The third condition in Proposition \ref{condition2positive}, which allows $\bar{G} + G$ to be negative definite, is evidently weaker than the assumptions in those existing works.
\end{remark}

The last two conditions in Proposition \ref{condition2positive} require $\Phi$ to be positive definite, which is not easy to verify since $\Phi$ is not a directly given parameter. We next investigate under what conditions $\Phi>0$ holds.

 Define the matrix $\tilde{K} = \{K^1(0), K^2(0), \ldots, K^n(0)\}$. If $\tilde{K}$ is invertible, then $\Phi$ is positive definite due to its definition \eqref{defPhi} and the fact that $P(0) > 0$. Notice that
\begin{equation*}
	\hat{H}(0) = \sum_{i=1}^{n} H^i(0) z_i = \sum_{i=1}^{n} P(0) K^i(0) z_i = P(0) \tilde{K} z,
\end{equation*}
so $\tilde{K}$ is invertible if and only if $\hat{H}(0) \neq 0$ for any nonzero $z \in \mathbb{R}^n$, where $(\hat{H}, \Gamma)$ is the solution to the BSDE \eqref{positive-condition}. Hence, our problem reduces to finding suitable conditions to ensure $\hat{H}(0) \neq 0$.
To this end, we introduce the following result, whose proof can be found in Theorems 3.4 and 3.11 of Kohlmann and Tang \cite{kohlmann2003minimization}.

\begin{proposition} \label{relationship}
Under {\bf Assumptions \ref{A1}} and {\bf\ref{A2}} or {\bf Assumptions \ref{A1}} and {\bf\ref{A3}}, $\widehat{u} \in L_{\mathbb{F}}^2\!\left(0,T; \R^m\right)$ is the optimal control of {\bf Problem \ref{slq}} if and only if there exists a pair of processes $(\hat{Y},\hat{Z}) \in L_{\mathbb{F}}^{2,c}(0,T; \R^n)\times L_{\mathbb{F}}^2(0,T; \R^n)$ solving the adjoint equation
\begin{equation} \label{FBSDE}
\left\{\begin{aligned} \dd \hat{Y}(s) & =-\left( A(s)^{\top} \hat{Y}(s)+ C(s)^{\top} \hat{Z}(s)+Q(s) ( \widehat{X}(s)-q)\right) \ds + Z(s) \dw(s), \quad s \in[0, T], \\
\hat{Y}(T) & = G ( \bar{X}(T) - \xi) +\lambda,
\end{aligned}\right.
\end{equation}
where $\widehat{X}\in L_{\mathbb{F}}^{2,c}(0,T; \R^n)$ is the unique solution of the state equation
\begin{equation*}
\left\{\begin{aligned}
\dd \hat{X}(s) &= \{A(s) \hat{X}(s) + B(s) \hat{u}(s) + a\} \ds + \{C(s) \bar{X}(s) + D(s) \hat{u}(s)+b \} \dw(s), \quad s \in[0, T], \\
\hat{X}(0) &= x,
\end{aligned}\right.
\end{equation*}
and the stationary condition
\begin{equation*}
B^{\top} \hat{Y} + D^{\top} \hat{Z} + R \widehat{u} = 0, 
\end{equation*}
holds. Furthermore, the relations involving solutions to BSDE \eqref{bsde1}, BSDE \eqref{bsde2} and BSDE \eqref{FBSDE} hold:
\begin{equation*}
H= - P\hat{X} + \hat{Y}\ \ {\rm and} \ \ \Gamma= -P \left( C\hat{X} + D\hat{u} + b \right) - \Lambda \hat{X} + \hat{Z}.
\end{equation*}
\end{proposition}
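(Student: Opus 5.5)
The plan is to treat Proposition \ref{relationship} as the standard stochastic maximum principle for {\bf Problem \ref{slq}}, combined with the decoupling of the Hamiltonian system by the Riccati pair $(P,\Lambda)$ and the pair $(H,\Gamma)$. Under either set of assumptions, $J_1(x,d,\lambda;\cdot)$ is a convex quadratic functional on the Hilbert space $\mathcal{U}[0,T]$. In the standard case it is in fact strongly convex. In the singular case it is coercive, because $G>0$ and $D^\top D>0$ make $u\mapsto \mathbb{E}\|X(T)\|^2$ coercive. Hence a control $\widehat u$ is optimal if and only if the Gâteaux derivative of $J_1$ at $\widehat u$ vanishes in every direction $v\in\mathcal{U}[0,T]$.

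First, I compute this derivative. Let $X^v$ solve the homogeneous variational equation $\dd X^v=(AX^v+Bv)\ds+(CX^v+Dv)\dw$ with $X^v(0)=0$. Then
\[
\tfrac12 \tfrac{\dd}{\dd\varepsilon}J_1(\widehat u+\varepsilon v)\big|_{\varepsilon=0}
=\mathbb{E}\Big[\int_0^T\big(\langle Q(\widehat X-q),X^v\rangle+\langle R\widehat u,v\rangle\big)\ds+\langle G(\widehat X(T)-\xi)+\lambda,X^v(T)\rangle\Big].
\]
Next, I let $(\hat Y,\hat Z)$ be the unique solution of the linear BSDE \eqref{FBSDE}. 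This BSDE has bounded coefficients and a square-integrable terminal value and driver, since $\widehat X\in L^{2,c}_{\mathbb F}$, so it is well posed in $L^{2,c}_{\mathbb F}\times L^2_{\mathbb F}$. Applying It\^o's formula to $\langle \hat Y,X^v\rangle$ turns the derivative into $\mathbb{E}\int_0^T\langle B^\top\hat Y+D^\top\hat Z+R\widehat u,v\rangle\ds$. The localization needed to remove the stochastic integrals is routine because all processes are square integrable. Since $v$ is arbitrary, optimality is equivalent to the stationary condition. This proves the "if and only if" part, with existence of $(\hat Y,\hat Z)$ being automatic.

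Second, I derive the relations. I will define $\tilde Y:=P\widehat X+H$ and $\tilde Z:=P(C\widehat X+D\widehat u+b)+\Lambda\widehat X+\Gamma$ and show they solve \eqref{FBSDE}. Uniqueness of the solution then gives $\hat Y=\tilde Y$ and $\hat Z=\tilde Z$, which are exactly the stated identities. The terminal value is $P(T)\widehat X(T)+H(T)=G\widehat X(T)+\lambda-G\xi$, as required. For the drift, I apply It\^o's product rule to $P\widehat X$ and add $-g_2\,\dt$. To make the terms match, I substitute the optimal feedback $\widehat u=\Theta\widehat X+u_0$ from Theorem \ref{valuefunction}. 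The identity $(D^\top PD+R)\Theta=-(B^\top P+D^\top\Lambda+D^\top PC)$ then cancels the quadratic $\Theta$-terms coming from $g_1$ and $\hat A,\hat C$. Likewise, the definition of $u_0$ cancels the $B^\top H+D^\top\Gamma+D^\top Pb$ terms. The result is a drift of $-(A^\top\tilde Y+C^\top\tilde Z+Q(\widehat X-q))$. As a consistency check, $B^\top\tilde Y+D^\top\tilde Z+R\widehat u=0$ follows from the same identity.

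The main obstacle is integrability in the singular and random-coefficient setting. The process $\Lambda$ is only BMO, so $\Lambda\widehat X$ and the It\^o cross terms must be shown to lie in $L^2_{\mathbb F}$. I would handle this with the energy inequality for BMO martingales, using $\mathbb{E}\int_0^T\|\Lambda\|^2\|\widehat X\|^2\ds\le C\|\Lambda\|_{\mathrm{bmo}}^2\mathbb{E}\sup_s\|\widehat X\|^2$. If needed, I would localize with stopping times and pass to the limit, following Theorems 3.4 and 3.11 of \cite{kohlmann2003minimization}. The algebraic cancellation is lengthy but mechanical.
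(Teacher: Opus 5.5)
Your argument is correct. The paper gives no proof of its own here, only a pointer to Theorems 3.4 and 3.11 of \cite{kohlmann2003minimization}, so you have supplied a self-contained derivation from results the paper states earlier.

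Your first half is the standard maximum-principle argument: convexity of $J_1$, the variational equation, and duality with $(\hat Y,\hat Z)$ via It\^o's formula for $\langle \hat Y, X^v\rangle$. For the relations you proceed in three steps:
\begin{enumerate}
\item take $(H,\Gamma)$ from Proposition \ref{unboundedbsde}, and the feedback $\hat u=\Theta\hat X+u_0$ from Theorem \ref{valuefunction} together with uniqueness of the optimal control;
\item show that $\bigl(P\hat X+H,\ P(C\hat X+D\hat u+b)+\Lambda\hat X+\Gamma\bigr)$ solves \eqref{FBSDE};
\item conclude by uniqueness for a BSDE with bounded coefficients.
\end{enumerate}
The drift discrepancy in step 2 does collapse to $\Theta^\top(D^\top PD+R)(\Theta\hat X+u_0-\hat u)=0$.

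The other natural route runs the same computation backwards. Set $H:=\hat Y-P\hat X$ and $\Gamma:=\hat Z-P(C\hat X+D\hat u+b)-\Lambda\hat X$. Use the stationary condition to get $\hat u=\Theta\hat X-(D^\top PD+R)^{-1}(B^\top H+D^\top\Gamma+D^\top Pb)$, and then verify that $(H,\Gamma)$ satisfies \eqref{bsde2}. The two routes trade off as follows.
\begin{itemize}
\item The backward route needs only the SRE and the maximum principle. It yields existence for the unbounded-coefficient equation \eqref{bsde2} and the feedback form of the optimal control as by-products. However, identifying its output with the solution of Proposition \ref{unboundedbsde} requires uniqueness for \eqref{bsde2}.
\item Your route presupposes Proposition \ref{unboundedbsde} and Theorem \ref{valuefunction}, which the paper takes from the same reference. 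So your argument confirms the relations given those statements rather than founding them independently. In exchange, it only ever uses uniqueness for the bounded-coefficient adjoint equation.
\end{itemize}

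Both routes need $\Lambda\hat X\in L^2_{\mathbb{F}}(0,T;\R^n)$. The bound you quote is valid: integrate by parts against $\sup_{r\le s}\|\hat X(r)\|^2$ and use $\mathbb{E}[\int_\tau^T\|\Lambda\|^2\ds\,|\,\mathcal{F}_\tau]\le\|\Lambda\|_{\mathrm{bmo}}^2$. Note, though, that this is a Garsia-type lemma rather than the energy inequality.
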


For BSDE \eqref{bsde2}, if we replace $\lambda$ by $\lambda+z$, we obtain another linear BSDE
\begin{equation} \label{bsde4}
\left\{
\begin{aligned}
\dd H_1(s) &= -g_2(H_1(s),\Gamma_1(s))\dt + \Gamma_1(s)\dw(s), \quad s\in[0,T], \\[0.5ex]
H_1(T) &= \lambda+z - G\xi,
\end{aligned}\right.
\end{equation}
and BSDE \eqref{positive-condition} can be regarded as the difference between \eqref{bsde2} and \eqref{bsde4}, i.e.,
\begin{equation*}
\hat{H}= H_1 - H \ \ {\rm and} \ \ \hat{\Gamma} = \Gamma_1 - \Gamma.
\end{equation*}
Meanwhile, consider the adjoint equation
\begin{equation*}\label{FBSDE1}
\left\{
\begin{aligned}
\dd \hat{Y}_1(s) &= -\big(A(s)^{\top}\hat{Y}_1(s) + C(s)^{\top}\hat{Z}_1(s) + Q(s)(\widehat{X}(s)-q)\big)\dt + \hat{Z}_1(s)\dw(s), \quad s \in[0, T], \\[0.5ex]
\hat{Y}_1(T) &= G(\widehat{X}(T)-\xi) + \lambda+z,
\end{aligned}\right.
\end{equation*}
and define
\[
\tilde{Y}:=\hat{Y}_1-\hat{Y}\ \ {\rm and} \ \ \tilde{Z}:=\hat{Z}_1-\hat{Z}.
\]
Then $(\tilde{Y},\tilde{Z})$ solves the linear BSDE
\begin{equation}\label{bsde5}
\left\{
\begin{aligned}
\dd \tilde{Y}(s) &= -\big(A(s)^{\top}\tilde{Y}(s) + C(s)^{\top}\tilde{Z}(s)\big)\dt + \tilde{Z}(s)\dw(s),\quad s \in[0, T], \\[0.5ex]
\tilde{Y}(T) &= z \neq 0.
\end{aligned}\right.
\end{equation}
Moreover, we have
\begin{equation*}
\hat{H}(0) = H_1(0) - H(0) = \big(-P(0)\widehat{X}(0)+\hat{Y}_1(0)\big) - \big(-P(0)\widehat{X}(0)+\hat{Y}(0)\big) = \tilde{Y}(0).
\end{equation*}
Hence, $\hat{H}(0) \neq 0$ if and only if $\tilde{Y}(0) \neq 0$.
This allows us to derive the following result.

\begin{proposition} \label{Phi>0}
Under \textbf{Assumptions \ref{A1}} and \textbf{\ref{A2}}, or \textbf{Assumptions \ref{A1}} and \textbf{\ref{A3}}, it holds that $\Phi > 0$ if either the state equation is one-dimensional or $A$ is a deterministic process.
\end{proposition}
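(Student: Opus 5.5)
By the reduction carried out just before the statement, it suffices to show that $\tilde{Y}(0)\neq 0$ for every nonzero $z\in\R^n$, where $(\tilde{Y},\tilde{Z})$ solves the homogeneous linear BSDE \eqref{bsde5}. Indeed, we showed $\hat{H}(0)=\tilde{Y}(0)$, and $\hat{H}(0)=P(0)\tilde{K}z$ with $P(0)>0$. So $\tilde{Y}(0)\neq0$ for all $z\neq0$ gives the invertibility of $\tilde{K}$. Then, from \eqref{defPhi}, $z^\top\Phi z\geq\langle P(0)\tilde{K}z,\tilde{K}z\rangle>0$, because the integral terms are nonnegative ($\mathscr{R}\geq0$, $Q\geq0$). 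Since $\Phi$ is a constant matrix, this strict positivity for $z\ne0$ is the same as $\Phi>0$. Under Assumption \ref{A3}, $P(0)>0$ comes from Kohlmann--Tang; under Assumption \ref{A2}, I will also invoke Assumption \ref{A4} via Theorem \ref{SRE}, as in the preceding discussion.

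\emph{Case $A$ deterministic.} I will use duality with a matrix-valued linear SDE. Let $\Phi_A$ be the fundamental matrix of the ODE $\dot{\Phi}_A=A\Phi_A$, $\Phi_A(0)=I_n$; it is deterministic and invertible. Let $M$ solve $\dd M = AM\,\dt + CM\,\dw$ with $M(0)=I_n$. Itô's formula gives $\dd(M^\top\tilde{Y})=(\cdots)\dw$, so $\tilde{Y}(0)=\mathbb{E}[M(T)^\top]z$ once the local martingale is shown to be a true martingale; this follows from standard $L^2$ estimates, since $M\in L^{2,c}_{\mathbb{F}}$ for bounded $A,C$ and $(\tilde{Y},\tilde{Z})$ is square integrable. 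Taking expectations in the SDE for $M$, with $A$ deterministic, shows that $m(t):=\mathbb{E}[M(t)]$ solves $\dot m=Am$, $m(0)=I_n$. Hence $m=\Phi_A$ is invertible, and $\tilde{Y}(0)=\Phi_A(T)^\top z\neq0$ for $z\neq0$.

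\emph{Case $n=1$.} Here \eqref{bsde5} is a scalar linear BSDE with bounded coefficients. The explicit representation gives $\tilde{Y}(0)=z\,\mathbb{E}[M(T)]$, where $M(T)=\exp\big(\int_0^T(A-\tfrac12C^2)\dt+\int_0^TC\,\dw\big)>0$. Therefore $\tilde{Y}(0)\neq0$ whenever $z\neq0$; equivalently, this is the strict comparison theorem for scalar BSDEs.

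\emph{Main obstacle.} The delicate points are the true-martingale property in the duality identity and making sure the earlier reduction (the identity $\hat{H}(0)=\tilde{Y}(0)$ via Proposition \ref{relationship}) is legitimately applied. Both are routine given the boundedness of $A$ and $C$. The essential idea is that when $A$ is random, $\mathbb{E}[M(T)]$ need not be invertible, which is why one of the two structural hypotheses is required.
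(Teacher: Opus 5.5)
\textbf{There is a genuine gap, and the paper's proof shares it.} Your two case computations are correct as statements about \eqref{bsde5}. They parallel the paper's argument, which gets $\tilde Z=0$ from uniqueness where you use duality with $\mathbb{E}[M]$. But the reduction you import from the preceding discussion is false, so these computations say nothing about $\Phi$.

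The identity $\hat H(0)=\tilde Y(0)$ comes from writing the adjoint for the multiplier $\lambda+z$ with the optimal state $\widehat X$ of the multiplier $\lambda$. The optimal state moves with the multiplier. Proposition \ref{relationship} at $\lambda+z$ gives $H_1=-P\widehat X_1+\hat Y_1'$, where $\widehat X_1$ is the new optimal state and $\hat Y_1'$ is its own adjoint. Hence $\hat H(0)=\hat Y_1'(0)-\hat Y(0)$. The difference $\hat Y_1'-\hat Y$ solves the coupled optimality system (forward state driven by $\widehat u_1-\widehat u$, terminal value $G(\widehat X_1(T)-\widehat X(T))+z$, stationarity condition), not \eqref{bsde5}. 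In your duality language, $\hat H(0)=\mathbb{E}[\hat M(T)]^\top z$, where $\hat M$ is the fundamental matrix for the closed-loop coefficients $\hat A=A+B\Theta$, $\hat C=C+D\Theta$. By contrast, $\tilde Y(0)=\mathbb{E}[M(T)]^\top z$ uses the open-loop $A$, $C$. Already for $n=m=1$, $A=C=D=Q=0$, $B=R=G=1$, one gets $P(t)=(1+T-t)^{-1}$, $\Lambda=0$ and $\hat A=-P$. Hence $\hat H(0)=z/(1+T)\neq z=\tilde Y(0)$.

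\textbf{Consequences for each case.} For $n=1$ the conclusion can be rescued by applying strict comparison directly to \eqref{positive-condition}; a BMO version is needed, since its coefficients contain $\Lambda$. For deterministic $A$ no repair is possible, because $\hat A$ is random through $\Theta$. In fact, in the singular case the claim is false. Take $n=m=2$, $A=C=0$, $Q=R=0$, $D=I_2$, $B=-J$ with $J=\left(\begin{smallmatrix}0&-1\\1&0\end{smallmatrix}\right)$. Take $G=g^{-1}I_2$ with $g=1-s\operatorname{sgn}(\cos W(T))$ and $s=e^{-T/2}/\mathbb{E}|\cos W(T)|\in(0,1)$. Then:
\begin{itemize}
\item Assumptions \ref{A1}, \ref{A3} and \eqref{posi-definite} hold.
\item $\mathscr R=0$ and $P^{-1}\mathscr S=-B=J$, so $\dd K^i=-J\Xi^i\dt+\Xi^i\dw$ with $K^i(T)=ge_i$.
\item Duality with $N(t)=e^{t/2}(\cos W(t)I_2-\sin W(t)J)$, which solves $\dd N=-JN\dw$, gives $K^i(0)=\mathbb{E}[gN(T)^\top]e_i$.
\item Since $g$ is even in $W(T)$ and $\mathbb{E}[g\cos W(T)]=0$, we get $K^i(0)=0$.
\end{itemize}
Hence $\Phi=0$, even though $\tilde Y\equiv z$.

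In the standard case, by contrast, no structural hypothesis is needed. Only Assumption \ref{A4} is required, which you rightly add so that $\Phi$ is defined. Indeed, $\langle\Phi\lambda,\lambda\rangle$ is the value of Problem \ref{SLQ} with $x=0$ and $a=b=q=\xi=0$. The bounds $R\ge\delta I_m$, $\delta I_n\le G\le\kappa I_n$ and $\mathbb{E}\|X(T)\|^2\le c_0\,\mathbb{E}\int_0^T\|u\|^2\dt$ then give a lower bound $c\|\lambda\|^2$ with $c>0$.
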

\begin{proof}
It suffices to show that $\hat{H}(0) = \tilde{Y}(0) \neq 0$ under the given conditions.

If the state equation is one-dimensional, then $\tilde{Y}(0) \neq 0$ follows immediately from the comparison theorem for scalar-valued BSDEs.

If $A$ is deterministic, then by the uniqueness of solutions to linear BSDEs, we obtain $\tilde{Z} = 0$ and consequently $\tilde{Y}(0) \neq 0$.
\end{proof}

\section{A Numerical Study}\label{numerical}
In this section, we present a numerical example for the multi-asset MV portfolio selection model introduced in Section \ref{sec:introduction}. The aim of this experiment is twofold: first, to illustrate the financial implications of the preference parameters $\upsilon$ and $\Sigma$, and second, to examine the behavior of the value function in a representative market setting.

We consider a financial market consisting of three risky assets: equity, bonds, and commodities. The corresponding price processes are assumed to follow Black-Scholes dynamics, as given in \eqref{assets} for $i=1,2,3$. The model parameters are chosen to reflect typical market conditions commonly adopted in the asset allocation literature. Specifically, the annualized expected returns and volatilities are set to
\begin{equation*}
	(\mu_1,\mu_2,\mu_3)= (0.08,0.03,0.05) \qquad (\sigma_1,\sigma_2,\sigma_3) = (0.20,0.05,0.30).
\end{equation*}

The investor's preference functional \eqref{preference} is characterized by the return weight vector $\upsilon$ and the risk aversion matrix $\Sigma$. These are set to
\begin{equation*}
	\upsilon = (1.5,\; 1.0,\; 0.5)^\top,
\end{equation*}
which specifies the relative importance assigned to the expected terminal wealth of each asset. This choice reflects heterogeneous beliefs or strategic preferences across different assets: the investor places the highest weight on equity returns, a moderate weight on bond returns, and a relatively lower weight on commodity returns.

The risk aversion matrix, which is symmetric and positive definite, is set to
\begin{equation*}
	\Sigma =
	\begin{pmatrix}
		3.5 & 0.6 & -0.5 \\
		0.6 & 2.6 & -0.3 \\
		-0.5 & -0.3 & 1.5
	\end{pmatrix}.
\end{equation*}
We denote by $\Sigma_{ij}$ the element in the $i$-th row and $j$-th column of $\Sigma$. The matrix $\Sigma$ quantifies the investor's sensitivity to risk. Its diagonal elements measure the penalty imposed on the variance of each asset, while the off-diagonal elements capture perceived interaction risks across different assets, thereby encoding diversification and hedging effects. Positive off-diagonal elements penalize comovement and discourage concentrated exposure, whereas negative values promote diversification benefits. In summary, $(\upsilon,\Sigma)$ provides a flexible and financially interpretable extension of the classical scalar mean--variance preference to a multi-asset setting.

The initial positions in the equity, bond and commodity are set to $15$, $10$, and $5$, respectively, i.e.,
\begin{equation*}
	x = (x_1,x_2,x_3) = (15,\,10,\,5)^\top.
\end{equation*}
In the above settings, the objective functional
\begin{equation*}
	\mathscr{J}(x,\pi)
	= \mathbb{E}\,\langle 2\upsilon, X(T) \rangle
	- \mathbb{E}\,\big\langle
	\Sigma \big( X(T)-\mathbb{E}[X(T)] \big),
	X(T)-\mathbb{E}[X(T)]
	\big\rangle,
\end{equation*}
admits a clear financial interpretation. The investor's objective is to maximize $\mathscr{J}(x,\pi)$, and hence the value function is
\begin{equation*}
	\mathcal{J}(x)
	:= \sup_{\pi\in\mathcal{U}[0,T]} \mathscr{J}(x,\pi).
\end{equation*}

Certainly, this problem can also be reformulated to minimize the functional $-\mathscr{J}(x,\pi)$, which fits into the theoretical framework developed for {\bf Problem \ref{MFLQ}}.

In this example, the coefficients of state equation \eqref{staeq} satisfy
\[
	A = a = C = b = 0,
\]
\begin{equation*}
	B =
	\begin{pmatrix}
		0.08 & 0 & 0 \\
		0 & 0.03 & 0 \\
		0 & 0 & 0.05
	\end{pmatrix}
	\ \ {\rm and}\ \
	D =
	\begin{pmatrix}
		0.20 & 0 & 0 \\
		0 & 0.05 & 0 \\
		0 & 0 & 0.25
	\end{pmatrix}.
\end{equation*}
The coefficients of the cost functional \eqref{functional} reduce to
\begin{equation*}
	Q = q = R = p = \eta = 0,
	\ \
	G = \Sigma,
	\ \
	\bar{G} = -\Sigma
	\ \ {\rm and}\ \
	\xi = \Sigma^{-1}\upsilon.
\end{equation*}
It is straightforward to verify that {\bf Assumptions~\ref{A1}} and~{\bf \ref{A3}} are satisfied and that $G+\bar{G}=0$. Therefore, all the well-posedness results established in the previous sections are valid in this example.

Obviously, BSDE \eqref{bsde3} admits a constant solution given by
\begin{equation*}
	(K,\Xi) \equiv (G^{-1}\lambda - \xi,\,0).
\end{equation*}
Consequently, the function $V_2(x,\lambda)$ takes the form
\begin{equation*}
	V_2(x,\lambda)
	= \big\langle
	P(0)\big(x + \Sigma^{-1}\lambda - \xi\big),
	x + \Sigma^{-1}\lambda - \xi
	\big\rangle.
\end{equation*}
According to the relation \eqref{equality}, the function $V_1(x,d,\lambda)$ is given by
\begin{equation*}
	V_1(x,d,\lambda)
	= \langle \Phi \lambda, \lambda\rangle
	+ 2 \langle \phi - d, \lambda \rangle
	- \langle \Sigma d,d \rangle
	+ \langle P(0)(x - \xi), x - \xi \rangle,
\end{equation*}
where
\[
	\Phi = \Sigma^{-1} P(0) \Sigma^{-1} - \Sigma^{-1} < 0\ \ {\rm and}\ \
	\phi = \Sigma^{-1} P(0)(x-\xi) + \xi.
\]
The maximizer with respect to $\lambda$ is
\[
	\lambda^*(d) = \Phi^{-1}(\phi - d).
\]
Substituting $\lambda^*(d)$ into $V_1$ yields
\begin{equation*}
	V(x,d)
	= \max_{\lambda \in \R^3} V_1(x,d,\lambda)
	= \langle (-\Sigma - \Phi^{-1})d,d \rangle
	+ 2 \langle \Phi^{-1}\phi, d \rangle
	- \langle \Phi^{-1}\phi, \phi \rangle
	+ \langle P(0)(x - \xi), x - \xi \rangle,
\end{equation*}
where $-\Sigma - \Phi^{-1} > 0$. The minimizer of $V(x,d)$ is therefore
\[
	d^* = (-\Sigma - \Phi^{-1})^{-1} \Phi^{-1}\phi,
\]
and the value function of {\bf Problem \ref{MFLQ}} is given by
\begin{equation*}
	\min_{d \in \R^3} V(x,d)
	= -\big\langle
	(-\Sigma - \Phi^{-1})^{-1} \Phi^{-1}\phi,
	\Phi^{-1}\phi
	\big\rangle
	- \langle \Phi^{-1}\phi, \phi \rangle
	+ \langle P(0)(x - \xi), x - \xi \rangle.
\end{equation*}
As a result, we obtain
\begin{equation*}
	\mathcal{J}(x)
	= \big\langle
	(-\Sigma - \Phi^{-1})^{-1} \Phi^{-1}\phi,
	\Phi^{-1}\phi
	\big\rangle
	+ \langle \Phi^{-1}\phi, \phi \rangle
	- \langle P(0)(x - \xi), x - \xi \rangle.
\end{equation*}



From the explicit expression of $\mathcal{J}(x)$, it is clear that $\mathcal{J}(x)$ is a quadratic function of the initial state $x$. However, the dependence of $\mathcal{J}(x)$ on the risk aversion matrix $\Sigma$ is not yet clear. We therefore proceed to investigate the impact of $\Sigma$ on the value function $\mathcal{J}(x)$.

To this end, we choose the off-diagonal elements $\Sigma_{21}$, $\Sigma_{31}$ and $\Sigma_{32}$ from the set $\{-0.5,\,0,\,0.5\}$ and compute the corresponding values of $\mathcal{J}(x)$. It turns out that $\mathcal{J}(x)$ attains its maximum when
\[
	\Sigma_{21} = \Sigma_{31} = \Sigma_{32} = -0.5.
\]
Accordingly, we take
\begin{equation*}
	\Sigma =
	\begin{pmatrix}
		3.5 & -0.5 & -0.5 \\
		-0.5 & 2.6 & -0.5 \\
		-0.5 & -0.5 & 1.5
	\end{pmatrix},
\end{equation*}
as the benchmark risk aversion matrix and its corresponding portfolio serves as a baseline one. Starting from this benchmark, we then vary each element $\Sigma_{ij}$ individually while keeping all other parameters, including the preference vector $\upsilon$, unchanged, and plot the resulting values of $\mathcal{J}(x)$. Finally, we obtain Figure \ref{image1} and \ref{image2} corresponding to variations in the diagonal and off-diagonal elements of $\Sigma$, respectively.
\begin{figure}[htbp]
	\centering
	\hspace{0em}
	\begin{minipage}[t]{0.49\textwidth}
	\centering
	\includegraphics[width=20em]{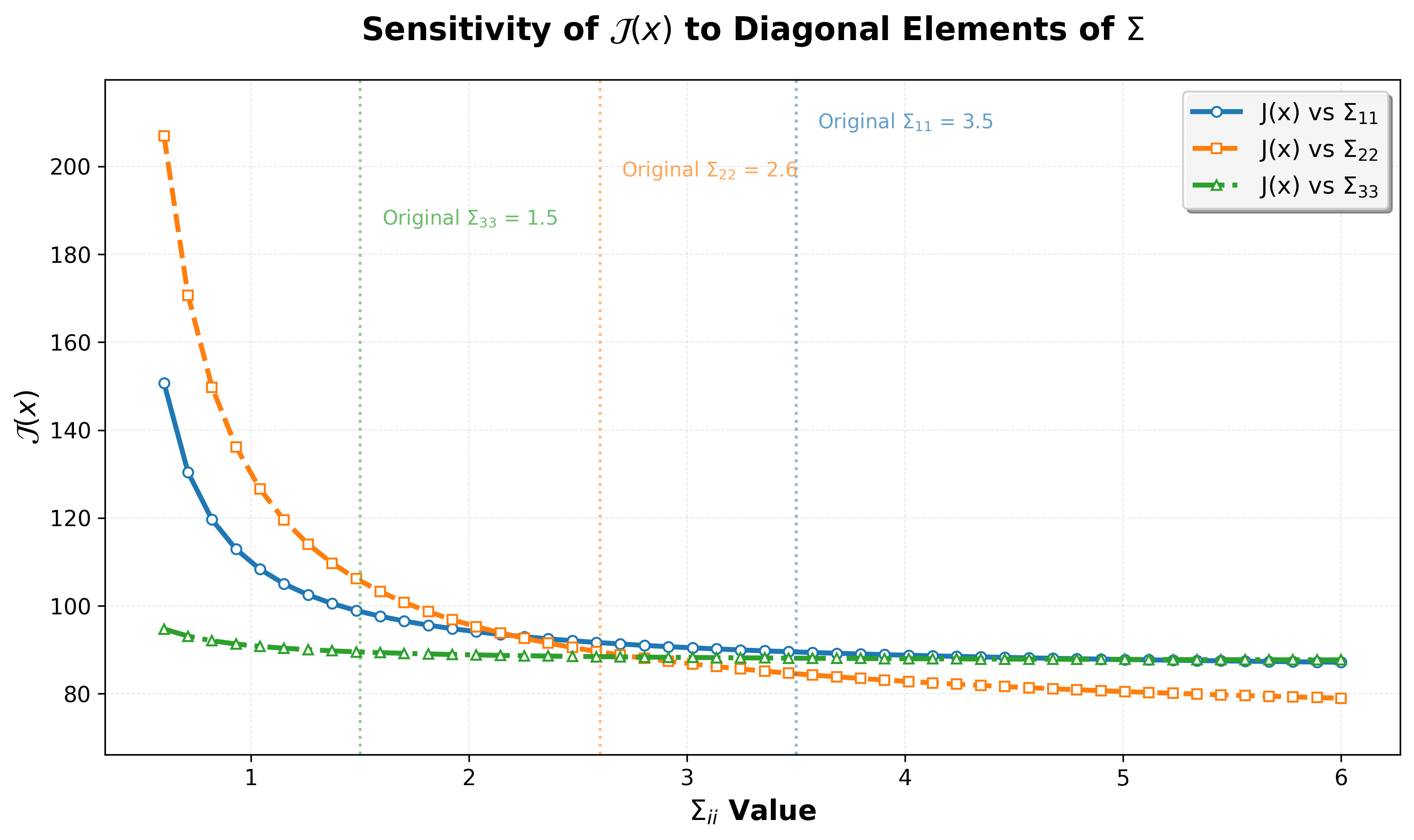}
	\caption{Diagonal}
	\label{image1}
	\end{minipage}
	\begin{minipage}[t]{0.49\textwidth}
	\centering
	\includegraphics[width=20em]{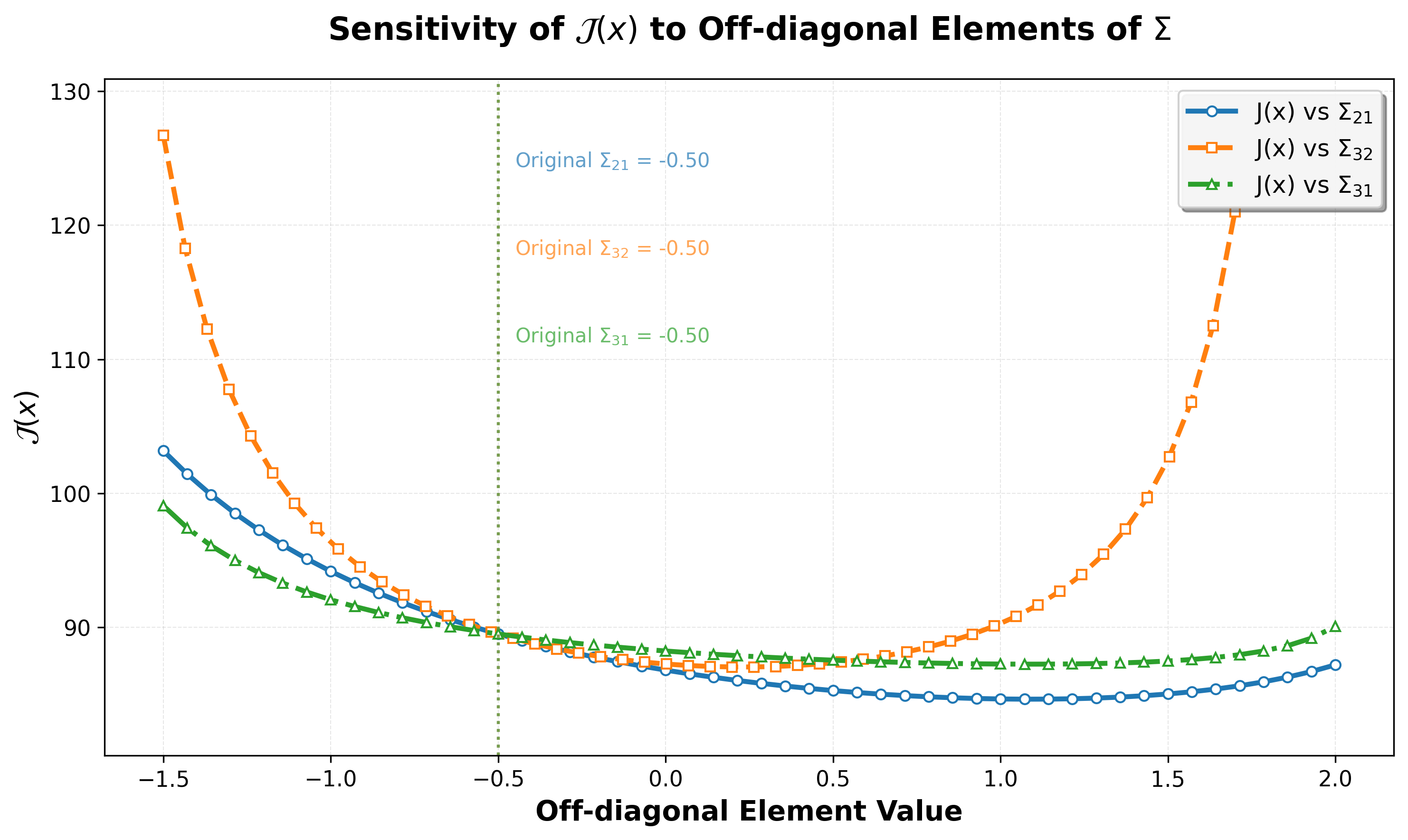}
	\caption{Off-diagonal}
	\label{image2}
	\end{minipage}
\end{figure}
\begin{remark}
	When we vary the elements of $\Sigma$, it is necessary to ensure that the resulting matrix remains positive definite. For instance, if an off-diagonal element $\Sigma_{ij}$ $(i \neq j)$ is changed, the symmetric element $\Sigma_{ji}$ should be adjusted in the meantime.
\end{remark}
Figure \ref{image1} illustrates the sensitivity of the objective value with respect to the diagonal elements $\Sigma_{11}$, $\Sigma_{22}$, and $\Sigma_{33}$. All three curves exhibit a monotonic decreasing pattern, reflecting the fact that increasing the variance penalty of any individual account inevitably leads to a deterioration of the MV performance.

Notably, the curve associated with the bond account shows a significantly steeper decline. From a financial perspective, this phenomenon can be interpreted as imposing a stronger risk penalty on an account that is less risky. Since this account plays a stabilizing role in the baseline portfolio, an increase in its variance effectively compresses the feasible investment space available to the investor. As a result, the portfolio loses an important low-risk allocation channel, which cannot be fully compensated by reallocating wealth to riskier assets. This leads to the most pronounced deterioration in the objective value among the three cases.

The relatively flat response of the objective value $J(x)$ with respect to $\Sigma_{33}$ indicates that the commodity account plays a secondary role in the MV trade-off. Interpreted as a commodity portfolio, this account is mainly used for diversification rather than return generation, and its optimal exposure remains limited. Consequently, increasing the variance penalty on this account only marginally affects the overall portfolio efficiency, leading to much weaker sensitivity compared with the equity and bond accounts.

Figure \ref{image1} illustrates the sensitivity of the objective value $\mathcal{J}(x)$ with respect to the diagonal elements $\Sigma_{11}$, $\Sigma_{22}$, and $\Sigma_{33}$. All three curves display a monotonic decreasing pattern, reflecting the fact that increasing the variance penalty on any individual asset inevitably harms the MV performance.

It is worth noting that the curve associated with the bond asset exhibits a significantly steeper decline. From a financial perspective, this behavior can be interpreted as the consequence of imposing a stronger risk penalty on an intrinsically low-risk asset. Since the position in bonds plays a stabilizing role in the baseline portfolio, an increase in its variance penalty effectively compresses the feasible investment space available to the investor. As a result, the portfolio loses an important low-risk allocation channel, which cannot be fully compensated by reallocating wealth to riskier assets, thereby leading to the most significant deterioration in the objective value among the three cases.

In comparison, the relatively flat response of $\mathcal{J}(x)$ with respect to $\Sigma_{33}$ indicates that the position in commodities plays a secondary role in the MV trade-off. This asset is primarily held for diversification rather than profit generation, and the risk associated with it is very limited. Consequently, increasing its variance penalty only marginally affects the overall efficiency of the portfolio, resulting in much weaker sensitivity compared with the equity or bond assets.

Figure \ref{image2} exhibits a clear $U$-shaped pattern for all three curves: the objective value $\mathcal{J}(x)$ is relatively large when the off-diagonal elements $\Sigma_{ij}$ $(i \neq j)$ are either strongly negative or strongly positive, and reaches its minimum at an intermediate level of correlation. Taking $\Sigma_{23}$ as an example, when $\Sigma_{23}$ is close to zero, the weak interaction between bonds and commodities limits effective risk sharing and reduces diversification benefits, leading to a lower achievable objective value. As $|\Sigma_{23}|$ increases, stronger dependence allows the portfolio to exploit joint movements more effectively; the investor can construct strategies that improve the trade-off between expected return and variance, resulting in higher objective values.

Although all three curves exhibit a similar $U$-shaped pattern, their rates of variation differ substantially. The curves corresponding to $\Sigma_{21}$ and $\Sigma_{31}$ change relatively slowly and remain fairly flat over a wide range of parameter values, while the curve corresponding to $\Sigma_{32}$ decreases and increases more steeply. This can be explained by the close correlation between bonds and commodities through inflation and interest rate channels.

Despite the $U$-shaped pattern, extreme values of cross-asset correlations are not advisable. There are two main reasons. First, when the correlation is either extremely weak or extremely strong, the effective investment space is compressed, reducing the attainable MV performance. Second, if the correlation is mispriced, the resulting utility loss can be substantial. Thus, moderate correlation levels provide the most balanced trade-off between diversification and coordinated risk allocation. This observation suggests that, from a MV perspective, cross-asset correlations should be neither neglected nor overstated, as extreme specifications inevitably lead to efficiency losses.



The above numerical results provide practical advice for portfolio selection. The variance penalty on individual assets and the cross-asset correlations strongly influence the MV performance. This highlights the importance of the $( \upsilon, \Sigma )$ choice in achieving an efficient and balanced multi-asset portfolio.

%
%

\section{Concluding Remarks}\label{conclude}

This paper is concerned with a general non-homogeneous stochastic linear-quadratic optimal control problem featuring a terminal mean-field term and random coefficients. To address the difficulties caused by the terminal mean-field term, we employ the Lagrangian duality method by first fixing the terminal expectation. We then establish two types of sufficient conditions on the coefficients to guarantee the solvability of the problem and derive the corresponding optimal state-feedback controls. The third specific condition in Proposition \ref{condition2positive} is shown to be weaker than the usual assumptions found in the existing literature. Finally, we construct a concrete financial example involving three risky assets and analyze the impact of $\upsilon$ and $\Sigma$ on utility, thereby providing guidance for the choice of model parameters and investment decisions.

Several interesting issues remain for future investigation. For instance:
\begin{enumerate}
 \item Extending the present method to backward stochastic linear-quadratic optimal control problems with an initial mean-field term in BSDEs and random coefficients;
 \item Integrating the above framework with reinforcement learning techniques to improve the selection of risky asset parameters in numerical examples such as those in Section \ref{numerical}.
\end{enumerate}

\bibliographystyle{amsplain}
%
\renewcommand{\baselinestretch}{1.2}
\providecommand{\bysame}{\leavevmode\hbox to3em{\hrulefill}\thinspace}
\providecommand{\MR}{\relax\ifhmode\unskip\space\fi MR }
\providecommand{\MRhref}[2]{%
\href{http://www.ams.org/mathscinet-getitem?mr=#1}{#2}
}
\providecommand{\href}[2]{#2}

\end{document}